\newtheorem{theorem}{Theorem}[section]
\newtheorem{lem}[theorem]{Lemma}
\newtheorem{lemma}[theorem]{Lemma}
\newtheorem{cor}[theorem]{Corollary}
\newtheorem{prop}[theorem]{Proposition}
\newtheorem*{B-principle}{B-principle}
\newtheorem*{H-principle}{H-principle}
\newtheorem{Specialthm}{Theorem}
\theoremstyle{remark}
\newtheorem{remark}[theorem]{Remark}
\theoremstyle{definition}
\newtheorem{definition}[theorem]{Definition}
\begin{document}

\title{Knot invariant with multiple skein relations}
\author{Zhiqing Yang}
\thanks{The author is supported by a grant (No. 11271058) of NSFC.}

\address{Department of Mathematics, Dalian University of Technology, China}
\email{yangzhq@dlut.edu.cn}

\date{\today}

\begin{abstract}
Given any oriented link diagram, one can construct knot invariants using skein relations. Usually such a skein relation contains three or four terms. In this paper, the author introduces several new ways to smooth a crossings, and uses a system of skein equations to construct link invariant. This invariant can also be modified by writhe to get a more powerful invariant. The modified invariant is a generalization of both the HOMFLYPT polynomial and the two-variable Kauffman polynomial. Using the diamond lemma, a simplified version of the modified invariant is given. It is easy to compute and is a generalization of the two-variable Kauffman polynomial.
\end{abstract}

\keywords{knot invariant \and knot polynomial \and skein relation \and diamond lemma }
\subjclass[2000]{Primary 57M27; Secondary 57M25}

\maketitle

\setcounter{tocdepth}{2}

\section{Introduction}\label{sec:1}

Polynomial invariants of links have a long history. In 1928, J.W. Alexander {\cite{Al}} discovered the famous Alexander polynomial. It has many connections with other topological invariants. More than 50 years later, in 1984 Vaughan Jones {\cite{J}}  discovered the Jones polynomial. Soon, the HOMFLYPT polynomial {\cite{HOMFLY}\cite{PT}} was found. It turns out to be a generalization of both the Alexander polynomial and the Jones polynomial. There are other polynomials, for example, the Kauffman 2-variable polynomial. All those polynomials satisfy certain skein relations, which are linear equations concerning several link diagrams. A natural questions is whether they can be further generalized. In this paper, the author presents a new approach to construct link invariant. It is a natural generalization of both the HOMFLYPT polynomial and the 2-variable Kauffman polynomial. This is a rewritten and improved  version of an earlier preprint of the author {\cite{Y}}.

For simplicity, we use the following symbols to denote link diagrams. In Fig. \ref{fig1}, letters $E,S,W,N$ mean the east, south,west and north directions as in usual maps,  $+$ means positive crossing, $-$ means negative crossing. For example, $S_+$ means the middle of the two arrows is south direction, and the crossing is of positive type. $S$ means the middle of the two arrows is south direction, and there is no crossing. Similarly, we have the local diagrams $N_+,N_-,N,$ $W_+,W_-,W,S_+,S_-,S$. The diagram $HC$ means that it is horizontal, and rotating clockwise. Similarly, $VT$ means that it is vertical, and rotating anticlockwise.

\begin{figure}[ht]
\beginpgfgraphicnamed{graphic-of-flat-world}
\begin{tikzpicture}
\draw [thick] [->]  (0pt,40pt) -- (40pt,0pt); \draw [thick] (0pt,0pt) -- (16pt,16pt); \draw [thick]  [->] (24pt,24pt) -- (40pt,40pt); \draw [thick] [thick] (15pt,-10pt) node[text width=0.4pt] {$E_+$};
\draw [thick]   [->] (100pt,0pt) -- (140pt,40pt); \draw [thick] (100pt,40pt) -- (116pt,24pt); \draw [thick]  [->] (124pt,16pt) -- (140pt,0pt); \draw [thick] (115pt,-10pt) node[text width=0.4pt] {$E_-$};
\draw [thick] [->] (200pt,0pt) .. controls (215pt,20pt) and (225pt,20pt) .. (240pt,0pt); \draw [thick] (215pt,-10pt) node[text width=0.4pt] {$E$};
\draw [thick] [->] (200pt,40pt) .. controls (215pt,20pt) and (225pt,20pt) .. (240pt,40pt);
\draw [thick]  [<-] (300pt,40pt) .. controls (320pt,25pt) and (320pt,15pt) .. (300pt,0pt); \draw [thick] (315pt,-10pt) node[text width=0.4pt] {$N$};
\draw [thick]  [<-] (340pt,40pt) .. controls (320pt,25pt) and (320pt,15pt) .. (340pt,0pt);
\draw [shift={(0,-3)}] [thick] [->]  (0pt,40pt) -- (40pt,0pt); \draw [shift={(0,-3)}] [thick]  [<-] (0pt,0pt) -- (16pt,16pt); \draw [shift={(0,-3)}] [thick]  (24pt,24pt) -- (40pt,40pt); \draw [shift={(0,-3)}] [thick] [thick] (15pt,-10pt) node[text width=0.4pt] {$S_-$};
\draw [shift={(0,-3)}] [thick]   [->] (100pt,0pt) -- (140pt,40pt); \draw [shift={(0,-3)}] [thick]  [<-]  (100pt,40pt) -- (116pt,24pt); \draw [shift={(0,-3)}] [thick] (124pt,16pt) -- (140pt,0pt); \draw [shift={(0,-3)}] [thick] (115pt,-10pt) node[text width=0.4pt] {$N_+$};
\draw [shift={(0,-3)}] [thick] [<-] (200pt,0pt) .. controls (215pt,20pt) and (225pt,20pt) .. (240pt,0pt); \draw [shift={(0,-3)}] [thick] (215pt,-10pt) node[text width=0.4pt] {$W$};
\draw [shift={(0,-3)}] [thick] [<-] (200pt,40pt) .. controls (215pt,20pt) and (225pt,20pt) .. (240pt,40pt);
\draw [shift={(0,-3)}] [thick] [<-] (300pt,40pt) .. controls (320pt,25pt) and (320pt,15pt) .. (300pt,0pt); \draw[shift={(0,-3)}]  [thick] (315pt,-10pt) node[text width=0.4pt] {$VC$};
\draw [shift={(0,-3)}] [thick]  [->] (340pt,40pt) .. controls (320pt,25pt) and (320pt,15pt) .. (340pt,0pt);
\draw [shift={(-7,-6)}] [thick] [<-] (200pt,0pt) .. controls (215pt,20pt) and (225pt,20pt) .. (240pt,0pt); \draw [shift={(-7,-6)}] [thick] (215pt,-10pt) node[text width=0.4pt] {$HC$};
\draw [shift={(-7,-6)}] [thick] [->] (200pt,40pt) .. controls (215pt,20pt) and (225pt,20pt) .. (240pt,40pt);
\draw [shift={(-7,-6)}] [thick] [->] (300pt,40pt) .. controls (320pt,25pt) and (320pt,15pt) .. (300pt,0pt); \draw[shift={(-7,-6)}]  [thick] (315pt,-10pt) node[text width=0.4pt] {$S$};
\draw [shift={(-7,-6)}] [thick]  [->] (340pt,40pt) .. controls (320pt,25pt) and (320pt,15pt) .. (340pt,0pt);
\draw [shift={(0,-6)}] [thick] [->] (200pt,0pt) .. controls (215pt,20pt) and (225pt,20pt) .. (240pt,0pt); \draw [shift={(0,-6)}] [thick] (215pt,-10pt) node[text width=0.4pt] {$HT$};
\draw [shift={(0,-6)}] [thick] [<-] (200pt,40pt) .. controls (215pt,20pt) and (225pt,20pt) .. (240pt,40pt);
\draw [shift={(0,-6)}] [thick] [->] (300pt,40pt) .. controls (320pt,25pt) and (320pt,15pt) .. (300pt,0pt); \draw[shift={(0,-6)}]  [thick] (315pt,-10pt) node[text width=0.4pt] {$VT$};
\draw [shift={(0,-6)}] [thick] [<-]  (340pt,40pt) .. controls (320pt,25pt) and (320pt,15pt) .. (340pt,0pt);
\end{tikzpicture}
\endpgfgraphicnamed
\caption{Local Diagrams with old notations.}
\label{fig1}
\end{figure}
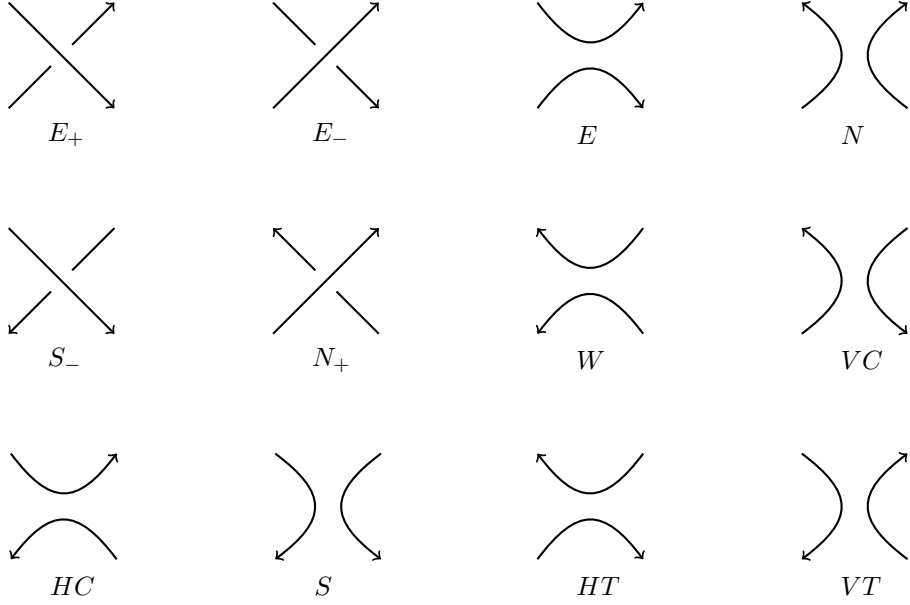

For a local crossing $E_+$ or $E_-$ of an oriented link diagram, we propose the following new skein relations. If the two arrows/arcs in the local diagram are from the same link component, then

\begin{gather}f(E_+)+bf(E_-)+c_1f(E)+c_2f(W)+c_3f(HC)+c_4f(HT)+d_1f(VC)+d_2f(VT)=0   \end{gather}

\noindent If the two arrows/arcs are from different components, then

\begin{gather}f(E_{+})+b'f(E_{-})+c_1'f(E)+c_2'f(W)+d'_1f(S)+d'_2f(N)=0.   \end{gather}

Now, let $X$ denote the quotient ring $Z[b,b',c_1,c_2,c_3,c_4,d_1,d_2,b',c_1'c_2',d_1',d_2',v_1,v_2,\cdots ]/R_1$, where $R_1=R\cup \{d_1'=d_2', (1+b+d_1+d_2)v_n+(c_1+c_2+c_3+c_4)v_{n+1}=0, $ for all $n=1,2,3, \cdots  \} $. $R$ is given in the next section. Here is our main theorem.

\begin{Specialthm}{}
For oriented link diagrams, there is a link invariant $f$ with values in $X$ and satisfies the following skein relations:

\noindent (1) If the two strands are from same link component, then

$f(E_{+})+bf(E_{-})+c_1f(E)+c_2f(W)+c_3f(HC)+c_4f(HT)+d_1f(VC)+d_2f(VT)=0.$

\noindent (2) Otherwise,
$f(E_{+})+b'f(E_{-})+c_1'f(E)+c_2'f(W)+d'_1f(S)+d'_2f(N)=0.$

The value for a trivial n-component link is $v_n$.

In general, replacing $X$ by any homomorphic image of $X$, one will get a link invariant.
\upshape
\upshape
\end{Specialthm}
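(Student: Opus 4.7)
The plan is to construct $f$ by induction on a suitable complexity measure of a diagram, using the skein relations as rewriting rules to reduce arbitrary diagrams to $X$-linear combinations of trivial links, whose values are prescribed to be the $v_n$. First I would extend the definition to every diagram as follows: if $D$ has no crossings, then $D$ is a trivial $n$-component link for some $n$ and we set $f(D)=v_n$; if $D$ has a crossing $c$, then solving relation (1) or (2) at $c$ for $f(E_+)$ expresses $f(D)$ as a linear combination of $f$ applied to the other local pictures. Six of the eight terms in (1), and four of the six terms in (2), are crossingless at $c$ and therefore have one fewer crossing than $D$; the single remaining term with $E_-$ has the same number of crossings but with the sign of $c$ flipped. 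To avoid an infinite loop between $E_+$ and $E_-$, I would adopt the descending-diagram complexity used for HOMFLYPT and Kauffman: after fixing base points and an ordering of components, assign to each diagram the pair (number of crossings, number of crossings violating the descending order), and check that each substitution strictly decreases this pair in lexicographic order.

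The core of the argument, foreshadowed by the abstract's mention of the diamond lemma, is to verify that the value $f(D)\in X$ produced by this rewriting procedure does not depend on the order in which crossings are resolved. I would phrase the construction as a rewriting system on the free $X$-module generated by based oriented link diagrams and apply Bergman's diamond lemma: termination comes from the complexity argument above, and confluence reduces to local confluence of critical pairs, each of which corresponds to a diagram with two crossings whose neighbourhoods interact. The defining relations $R$ (to be introduced in the next section) are precisely those obtained by resolving every such pair in the two possible orders and equating the outcomes; together with the auxiliary relations $d_1'=d_2'$ and $(1+b+d_1+d_2)v_n+(c_1+c_2+c_3+c_4)v_{n+1}=0$ added in $R_1$, they guarantee confluence inside $X$.

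Next I would verify invariance under the three Reidemeister moves. The relation $(1+b+d_1+d_2)v_n+(c_1+c_2+c_3+c_4)v_{n+1}=0$ in $R_1$ is exactly the equation one obtains by applying relation (1) to a Reidemeister I kink on a single component: the four $c_i$ smoothings each adjoin a free loop (increasing $n$ by one) while the $b$ and $d_j$ terms leave the component count unchanged, so this identity is equivalent to $f$ being unchanged by the kink. The equality $d_1'=d_2'$ plays the same role in (2) when the $S$ and $N$ smoothings interchange the way the two components sit, forcing agreement. Reidemeister II and III invariance follow from the confluence checks for the two- and three-crossing tangles appearing in those moves, so they are subsumed by the critical pair verifications built into $R$.

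The step I expect to be the main obstacle is the combinatorial bookkeeping inside the critical pair analysis. The two skein relations branch on whether the strands meeting at a crossing belong to the same component, and each of the smoothings $E$, $W$, $HC$, $HT$, $VC$, $VT$, $S$, $N$ can split a component into two or merge two components into one, so after every rewrite one must recompute which relation applies at neighbouring crossings. Enumerating all interactions of two adjacent crossings under the eight-term and six-term rules, and checking that every resulting identity lies in $R$, is the computationally heaviest part of the proof; this is exactly what the author's construction of $R$ in the next section is designed to encode, and the main theorem reduces to the statement that this encoding, once in place, gives a confluent and Reidemeister-invariant rewriting system.
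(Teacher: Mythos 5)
Your overall architecture matches the paper's: induction on the pair (crossing number, number of ``bad''/order-violating crossings), definition of $f$ by resolving at a bad crossing, and well-definedness via the commutativity $f_{pq}=f_{qp}$ of resolving two crossings in either order, which is exactly what the relation set $R$ encodes. But there are two genuine gaps. First, you assert that Reidemeister II and III invariance are ``subsumed by the critical pair verifications built into $R$.'' They are not. The relations in $R$ only express that resolving one fixed diagram at two crossings in either order gives the same element of $X$; Reidemeister invariance is the assertion that two \emph{different} diagrams (one with the extra crossings of the move, one without) have equal values, and confluence of the rewriting procedure gives each diagram a well-defined value without saying anything about whether those two values coincide. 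The paper needs separate arguments here: for RIII it reduces (by resolving all other bad crossings and invoking the induction hypothesis) to the case where all three crossings of the triangle are good, so both diagrams are monotone and both receive $v_n$; for RII it must handle the case where both crossings are bad and no base-point change fixes this, and there it performs an explicit computation with the six-term relation in which $f(S)=f(N')$ and $f(N)=f(S')$, which is precisely where $d_1'=d_2'$ is needed. Your attribution of $d_1'=d_2'$ is in the right spirit, but you have not supplied the actual RII argument.

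Second, your construction fixes base points and an ordering of components to define the complexity, but you never verify that the resulting value is independent of those choices; without that, $f$ is an invariant of marked diagrams only. This is not a formality: in the paper it occupies statements (3) and (5) of the main proposition, and the proof of base-point independence (the case where moving a base point past a crossing turns it from good to bad) is exactly where the relation $(1+b+d_1+d_2)v_n+(c_1+c_2+c_3+c_4)v_{n+1}=0$ is used, via a lemma showing that resolving a monotone diagram at the first crossing after the base point yields only unlink diagrams. Your reading of that relation as the RI-kink consistency condition is compatible with this (a kink is a special case), but RI invariance itself is proved in the paper by reducing both diagrams to monotone ones; the $v_n$ relation is indispensable for the base-point argument rather than for RI.
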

\medskip

This invariant can also be modified by the writhe, like the Kauffman bracket and the Kauffman 2-variable polynomial {\cite{K}}. Let $A$ be a new variable. Let $Y$ denote the quotient ring $\\ Z[A,A^{-1},b,b',c_1,c_2,c_3,c_4,d_1,d_2,b',c_1'c_2',d_1',d_2',v_1,v_2,\cdots ]/R_2$, where $R_2=R\cup \{d_1'=d_2', AA^{-1}=1, Av_n+A^{-1}bv_n+(c_1+c_2+c_3+c_4)v_{n+1}+(d_1+d_2)v_n=0, $ for all $n=1,2,3, \cdots  \} $. Then we have the following theorem.

\begin{Specialthm}{}
There is a link invariant $F$ with values in $Y$. For oriented link diagram $D$, $F(D)=f(D)A^{-w}$ where $w$ is a the writhe of the link diagram, and $f$ satisfies the following skein relations.

\noindent (1) If the two strands are from same link component, then

$f(E_{+})+bf(E_{-})+c_1f(E)+c_2f(W)+c_3f(HC)+c_4f(HT)+d_1f(VC)+d_2f(VT)=0.$

\noindent (2) Otherwise,
$f(E_{+})+b'f(E_{-})+c_1'f(E)+c_2'f(W)+d'_1f(S)+d'_2f(N)=0.$

The value of $F$ for a trivial n-component link is $v_n$.

In general, replacing $Y$ by any homomorphic image of $Y$, one will get a link invariant.
\upshape
\upshape
\end{Specialthm}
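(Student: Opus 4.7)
The plan is to reduce to the first theorem by the standard writhe-normalization trick, as in the passage from the Kauffman bracket to the Kauffman 2-variable polynomial. I would first observe that the construction of $f$ from the first theorem still produces a well-defined function on oriented link diagrams valued in $Y$: the only relation of $R$ that is not inherited by $R_2$ is the R-I normalization $(1+b+d_1+d_2)v_n+(c_1+c_2+c_3+c_4)v_{n+1}=0$, which has been replaced in $R_2$ by its $A$-weighted analogue. All the relations responsible for R-II and R-III consistency are untouched, so $f$ is a well-defined regular-isotopy invariant valued in $Y$. Since the writhe $w$ is itself invariant under R-II and R-III, the product $F=f\cdot A^{-w}$ inherits invariance under R-II and R-III.

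The substantive step is R-I invariance of $F$. I would apply the same-component skein relation at the crossing inside a positive kink attached to a diagram $D$. Reading off the eight resolutions from Fig.~\ref{fig1} against the orientation of a positive kink, the two crossing terms $E_\pm$ are $D$ with a positive or negative kink respectively, the four smoothings $E, W, HC, HT$ produce $D\sqcup U$ (the diagram $D$ with a disjoint trivial unknot), and the remaining two smoothings $VC, VT$ yield $D$ with the kink unwound into a straight strand. The skein relation therefore specializes to
\[
f(D_+) + b\,f(D_-) + (c_1+c_2+c_3+c_4)\,f(D\sqcup U) + (d_1+d_2)\,f(D) = 0.
\]
The relation $Av_n+A^{-1}bv_n+(c_1+c_2+c_3+c_4)v_{n+1}+(d_1+d_2)v_n=0$ adjoined in $R_2$ is precisely the statement that $f(D\sqcup U) = -\frac{A+bA^{-1}+d_1+d_2}{c_1+c_2+c_3+c_4}\,f(D)$ on trivial links. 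Propagating this multiplicative factor to general $D$ (which is legitimate because $f$ is a regular-isotopy invariant and the ratio $v_{n+1}/v_n$ is forced to be independent of $n$ by the defining relation of $R_2$) and substituting back, the above equation reduces to $f(D_+)+b\,f(D_-)=(A+bA^{-1})f(D)$, which is consistent with the pair $f(D_+)=A\,f(D)$ and $f(D_-)=A^{-1}f(D)$. Combined with $w(D_\pm)=w(D)\pm 1$, these identities give $F(D_\pm)=F(D)$, establishing R-I invariance.

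The principal effort is the geometric bookkeeping of the eight smoothings at a kink: verifying which of $E, W, HC, HT, VC, VT$ yields $D\sqcup U$ and which yields $D$, and confirming that the two strands of a kink crossing indeed lie on the same link component so that the eight-term relation (rather than the six-term one) applies. A secondary subtlety is that the single skein equation at a kink gives only the combined identity for $f(D_+)+b\,f(D_-)$, not the individual values; separating the two requires building the rule ``add a positive kink $\leftrightarrow$ multiply by $A$'' into the definition of $f$ at the step where $f$ is extended from regular-isotopy classes to diagrams, and checking that the extension is consistent with the skein reductions on both sides of a kink. Once this is pinned down, the defining relation of $R_2$ is seen to be engineered precisely to secure R-I invariance. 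The normalization $F(\text{trivial }n\text{-component})=v_n$ then follows because a standard diagram of a trivial link has writhe zero, and the extension to any homomorphic image of $Y$ is immediate.
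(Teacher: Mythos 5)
Your proposal contains two genuine gaps. First, the claim that the construction of $f$ from the first theorem ``still produces a well-defined function on oriented link diagrams valued in $Y$'' is unfounded. Well-definedness of $f$ on oriented diagrams (as opposed to marked diagrams) requires independence of the choice of base points, and the paper's proof of that independence (Case 3 of statement (3)) uses precisely the relation $(1+b+d_1+d_2)v_n+(c_1+c_2+c_3+c_4)v_{n+1}=0$, which is \emph{not} a relation of $Y$. The paper's actual proof does not reuse the old $f$: it changes the base case of the induction, assigning $f(D)=A^{w}v_n$ (not $v_n$) to a monotone diagram $D$ of writhe $w$, and then re-proves base-point independence using the new relation $Av_n+A^{-1}bv_n+(c_1+c_2+c_3+c_4)v_{n+1}+(d_1+d_2)v_n=0$ together with a separate bookkeeping claim that all six smoothings at the relevant crossing have the same writhe $w$ as the rest of the diagram. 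Without this modified base case your $f$ is not even a function on oriented diagrams, so the subsequent regular-isotopy discussion has nothing to stand on.

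Second, your Reidemeister I argument does not close. The skein relation at a kink yields only the combined identity $f(D_+)+b\,f(D_-)=(A+bA^{-1})f(D)$ --- and even that presupposes $(c_1+c_2+c_3+c_4)f(D\sqcup U)=-(A+A^{-1}b+d_1+d_2)f(D)$ for an \emph{arbitrary} diagram $D$, which is itself an inductive statement, not a consequence of the relation on the $v_n$; moreover $c_1+c_2+c_3+c_4$ need not be invertible in $Y$, so the ``ratio $v_{n+1}/v_n$'' is not a well-defined element you can propagate. Observing that the combined identity is ``consistent with'' $f(D_\pm)=A^{\pm1}f(D)$ proves neither of those individual identities, and you explicitly defer the needed work (``building the rule into the definition of $f$''). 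That deferred work is exactly the content of the paper's proof: with the base case $f(\text{monotone})=A^{w}v_n$ in place, R-I invariance of $F$ follows by moving the base point so the kink crossing is good, reducing to two monotone diagrams whose writhes differ by one, whence $F(D)=F(D')=v_n$ directly. As written, your argument establishes a necessary condition on $F$ rather than its existence and invariance.
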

\medskip

The coefficients of each invariant come from some commutative ring. Homomorphisms and representations of those rings define new link invariants. Some choices lead to knot polynomials. For example, if in the ring $X$ we add the following relations $c_2=c_3=c_4=d_1=d_2=c_2'=d_1'=d_2'=0$ and $b=b'$, then we get a generalized HOMFLYPT polynomial with three variables $b, c_1,c_2$. If we ask $c_1=c_1'$, then the invariant we get is equivalent to the usual HOMFLYPT polynomial by some variable change.

If we set $c_1=c_2=c_3=c_4=-z/4, c_1'=c_2'=-z/2, d_1=d_2=d_1'=d_2'=z/2$, and $b=b'=-1$, and modify it by writhe, then we can get the 2-variable Kauffman polynomial. Hence the modified invariant is a generalization of both HOMFLY polynomial and 2-variable Kauffman polynomial.

Compare with the well-known knot polynomials, there are a few differences here. (1) The skein relation has 2 cases. (2) The coefficients now are from a commutative (or non commutative) ring, and there are some nontrivial relations among them. (3) The skein relation is not local here. This means for a given oriented diagram $D$, if we use the skein relation, the diagram is not only changed locally. The orientation change is globally. To avoid contradictions, not all kinds of diagrams are allowed, and the coefficients have to satisfy certain relations. This is why we do not have a polynomial invariant. The invariant takes value in a commutative ring.

Our work was motivated by Jozef H. Przytycki and Pawel Traczyk's paper {\cite{PT}}, and V. O. Manturov's proofs in his book {\cite{M}}. Our construction and proof is a modification and improvement of their work.

\section{Full resolution commutativity}\label{sec:2}

\subsection{Orientation of diagrams}\label{sec:2.1}

For simplicity, the symbol $E_+$ ($E_-$, etc.) has two meanings in this paper. It denotes (i) the whole link diagram with the special local pattern, (ii) the value of our invariant on the diagram $E_+$. In this section, instead of writing $$f(E_+)+bf(E_-)+c_1f(E)+c_2f(W)+c_3f(HC)+c_4f(HT)+d_1f(VC)+d_2f(VT)=0,$$ we write
   $$\ \ E_++bE_-+c_1E+c_2W+c_3HC+c_4HT+d_1VC+d_2VT=0.$$ In later sections, we use $f(E_+)$ to denote the value of our invariant on the diagram $E_+$.

As mentioned before, we propose the following new skein relations. If the two arrows/arcs are from the same link component, then
$$E_{+}+bE_{-}+c_1E+c_2W+c_3HC+c_4HT+d_1VC+d_2VT=0.$$

\noindent If the two arrows/arcs are from different components, then
$$E_{+}+b'E_{-}+c_1'E+c_2'W+d'_1S+d'_2N=0.$$

Each diagram/term in the equations is canonically orientated as follows. Take the first equation for example. We can draw a disk in each of the diagrams $E_{+},E_{-},E+,W,HC,HT,VC,VT$. Outside the disks, all diagrams are all the same, inside the disks are as in Fig. \ref{fig1}. Furthermore, inside the disks, $E_{+},E_{-},E+,W,HC,HT,VC,VT$ are already oriented. Let's start with $E_+$, suppose every component of $E_+$ is oriented. Then $E_-$ and $E$, outside the disk one take the same orientation as in $E_+$. Then $E_-$ and $E$ are oriented. The link components of $W$ can be divided into two sets. One set, say A, contains components passing through the disk. Then we can extend the orientation of the disk to the whole components. For other components, we just take the same orientation as in $E_+$. The same can be done for $HC,HT,VC,VT$.

In other words, for the link components containing the arcs in the local diagram, their orientations are determined by the local diagrams. For all other components, the orientation is not changed. Since we distinguish the same/different component cases, there is no contradiction regarding to the orientation assumption.

There is no $S$ or $N$ terms in the first equation, because if the two strands are from same component, this orientation assignment will cause contradiction in orientation. Under our assumption for the orientation, the two equations are the maximal. If one add other diagrams, then there will be contradiction for orientation.

\subsection{Resolution order independence condition $f_{pq}=f_{qp}$}\label{sec:2.2}

If we want to calculate the invariant of a diagram $D$, we can start at any crossing point $p$. When we apply the formula at a crossing $p$, there are two things to check, 1. the two arcs are from same/ different component, 2. the crossing is positive or negative. We call the above information the {\bf crossing pattern} of $p$.
 The crossing pattern determines which skein equation to use and how to use it. For example, if $p$ is a negative crossing point, and the two arcs are from the same link component, then we get: $E_{-}=-b^{-1}\{E_{+}+c_1E+c_2W+c_3HC+c_4HT+d_1VC+d_2VT\}$. Hence if we have defined the invariant for $E_{+},E,\cdots $, we get the invariant for $E_-$. This is similar to the usually calculation of Jones polynomial by using skein relation. This also motivates us to define the invariant inductively. Such a procedure that write one term as a linear combination of other terms in the equation will be referred to as {\bf resolving} at $p$. We call $-b^{-1}\{E_{+}+c_1E+c_2W+c_3HC+c_4HT+d_1VC+d_2VT\}$ a {\bf linear sum}. We denote it by $f_{p}(D)$.

Given a link diagram $D$ with crossings $p_1, \cdots p_n$. Pick two crossings, say $p,q$. We can use the skein relation to resolve the diagram at a crossing $p$. The output is a linear combination of many terms. Each term involves a link diagram $D_j$. We write $f_p(D)=\sum \alpha_i f(D_j)$.  Each $D_j$ also has a crossing point corresponding to the crossing $q$. For each such diagram $D_j$, we resolve it at the point $q$. We shall get $f_q(D_j)$, a linear combination of many terms. Add the results up, we get a linear combination of linear combinations. We denote the result by $f_{pq}(D)=\sum \alpha_i f_q(D_j)$. It is the result of completely resolving at two crossing points in the order $p$ first, then $q$. Similarly, if we resolve $D$ at $q$ first, then $q$, we can get another result $f_{qp}(D)$. Now, we require that for any pair $p,q$, $f_{pq}(D)=f_{qp}(D)$. The equation $f_{pq}(D)=f_{qp}(D)$ is very important in this paper. Once this condition is satisfied, one need just a few more equations to get a link invariant. We shall discuss this condition in full detail and consider several cases.

\bigskip\noindent {\bf Easy cases.} Resolve at $p$ would not change the pattern of $q$ and vise versa.

For examples, $D$ is a disjoint union of two planar link diagrams $G_1$ and $G_2$, $p\in G_1$, and $q\in G_2$. In this case, when resolve $p$, we get diagrams $D_1, \cdots , D_k$. In all the $D_i$'s, $q$ has the same crossing pattern. In $D$, $q$ also has the same crossing pattern.

$\\$
\noindent {\bf Subcase 1.} Suppose that both $p,q$ are positive crossings, for $p$, the two arrows are from same component, for $q$, the two arrows are not from same component. When we resolve $p$, we get

$$-E_{+}=bE_{-}+c_1E+c_2W+c_3HC+c_4HT+d_1VC+d_2VT.$$

Since we are discussing two crossings here, we use $(E_-,E_+)$ to denote the first crossing $p$ is changed to $E_-$, the second crossing $q$ is $E_+$.
For the first term $bE_-$ of right hand side the above equation, we resolve at $q$ and get

$$-b(E_{-},E_+)=bb'(E_{-},E_{-})+bc_1'(E_{-},E)+bc_2'(E_{-},W)+bd'_1(E_{-},S)+bd'_2(E_{-},N).$$

Likewise, we have the following equations.

\noindent
\resizebox{14cm}{!} {$(E_+,E_+)=-\{b(E_{-},E_+)+c_1(E,E_+)+c_2(W,E_+)+c_3(HC,E_+)+c_4(HT,E_+) +d_1(VC,E_+)+d_2(VT,E_+)\}$}

\noindent
$-b(E_{-},E_+)=bb'(E_{-},E_{-})+bc_1'(E_{-},E)+bc_2'(E_{-},W)+bd'_1(E_{-},S)+bd'_2(E_{-},N)\\$
$-c_1(E,E_+)=c_1b'(E,E_{-})+c_1c_1'(E,E)+c_1c_2'(E,W)+c_1d'_1(E,S)+c_1d'_2(E,N)\\$
$-c_2(W,E_+)=c_2b'(W,E_{-})+c_2c_1'(W,E)+c_2c_2'(W,W)+c_2d'_1(W,S)+c_2d'_2(W,N)\\$
\resizebox{14cm}{!} {$-c_3(HC,E_+)=c_3b'(HC,E_{-})+c_3c_1'(HC,E)+c_3c_2'(HC,W)+c_3d'_1(HC,S)+c_3d'_2(HC,N) $}

\noindent
$-c_4(HT,E_+)=c_4b'(HT,E_{-})+c_4c_1'(HT,E)+c_4c_2'(HT,W)+c_4d'_1(HT,S)+c_4d'_2(HT,N)\\$
\resizebox{14cm}{!} {$-d_1(VC,E_+)=d_1b'(VC,E_{-})+d_1c_1'(VC,E)+d_1c_2'(VC,W)+d_1d'_1(VC,S)+d_1d'_2(VC,N) $ }

\noindent
$-d_2(VT,E_+)=d_2b'(VT,E_{-})+d_2c_1'(VT,E)+d_2c_2'(VT,W)+d_2d'_1(VT,S)+d_2d'_2(VT,N)\\$

We can build a table for this result. We put the crossing type of the first crossing in the first column, the crossing type of the second crossing in the first row.

\begin{table}[ht]
\caption{Trivial case, resolving $p$ first.}\label{tab:1}
\begin{center}
\begin{tabular}[pos]{|l|l|l|l|l|l|l|l|l|l|l| }
\hline  p $\backslash $ q & $E_-$ & $E$ & $W$ & $S$ & N     \\
\hline $E_-$ & $bb'$ & $bc_1'$ & $bc_2'$  & $bd_1'$  &  $bd_2'$    \\
\hline $E$ & $c_1b'$ & $c_1c_1'$ & $c_1c_2'$  & $c_1d_1'$  &  $c_1d_2'$    \\
\hline $W$ & $c_2b'$ & $c_2c_1'$ & $c_2c_2'$  & $c_2d_1'$  &  $c_2d_2'$   \\
\hline $HC$ & $c_3b'$ & $c_3c_1'$ & $c_3c_2'$  & $c_3d_1'$  &  $c_3d_2'$   \\
\hline $HT$ & $c_4b'$ & $c_4c_1'$ & $c_4c_2'$  & $c_4d_1'$  &  $c_4d_2'$      \\
\hline $VC$ & $d_1b'$ & $d_1c_1'$ & $d_1c_2'$  & $d_1d_1'$  &  $d_1d_2'$    \\
\hline $VT$ & $d_2b'$ & $d_2c_1'$ & $d_2c_2'$  & $d_2d_1'$  &  $d_2d_2'$    \\
\hline
\end{tabular}
\end{center}
\end{table}

Other other hand, if we resolve at $q$ first, we shall get another table.

\begin{table}[ht]
\caption{Trivial case, resolving $q$ first.}\label{tab:2}
\begin{center}
\begin{tabular}[pos]{|l|l|l|l|l|l|l|l|l|l|l| }
\hline  p $\backslash $ q & $E_-$ & $E$ & $W$ & $S$ & N     \\
\hline $E_-$ & $b'b$ & $c_1'b$ & $c_2'b$  & $d_1'b$  &  $d_2'b$    \\
\hline $E$ & $b'c_1$ & $c_1'c_1$ & $c_2'c_1$  & $d_1'c_1$  &  $d_2'c_1$    \\
\hline $W$ & $b'c_2$ & $c_1'c_2$ & $c_2'c_2$  & $d_1'c_2$  &  $d_2'c_2$   \\
\hline $HC$ & $b'c_3$ & $c_1'c_3$ & $c_2'c_3$  & $d_1'c_3$  &  $d_2'c_3$   \\
\hline $HT$ & $b'c_4$ & $c_1'c_4$ & $c_2'c_4$  & $d_1'c_4$  &  $d_2'c_4$      \\
\hline $VC$ & $b'd_1$ & $c_1'd_1$ & $c_2'd_1$  & $d_1'd_1$  &  $d_2'd_1$    \\
\hline $VT$ & $b'd_2$ & $c_1'd_2$ & $c_2'd_2$  & $d_1'd_2$  &  $d_2'd_2$    \\
\hline
\end{tabular}
\end{center}
\end{table}

Compare the results, the easiest way to make them equal is to ask the coefficients equal each other. Therefor, we ask any element from the set $\{b,c_1,c_2,c_3,c_4,d_1,d_2\}$ commutes with any element from the set $\{b',c_1',c_2',d_1',d_2'\}$.

\bigskip Let $\overline{b}=b^{-1}$, $\overline{c}_i=b^{-1}c_i$ for $i=1,2,3,4$,  $\overline{d}_i=b^{-1}d_i$ for $i=1,2$.
$\overline{b}'={b'}^{-1}$, $\overline{c}_i'={b'}^{-1}c_i'$ for $i=1,2,3,4$,  $\overline{d}_i'={b'}^{-1}d_i'$ for $i=1,2$. For any pair of such symbols $x$ and $\overline{x}$, we call them the {\bf conjugates} of each other. This has an obvious benefit as follows. In a skein relation, for example $E_{+}+bE_{-}+c_1E+c_2W+c_3HC+c_4HT+d_1VC+d_2VT=0$, we can get $E_{+}=-\{bE_{-}+c_1E+c_2W+c_3HC+c_4HT+d_1VC+d_2VT\}$ and $E_{-}=-\{\overline{b}E_{+}+\overline{c}_1E+\overline{c}_2W+\overline{c}_3HC+\overline{c}_4HT+\overline{d}_1VC+\overline{d}_2VT\}.$ This means if we change $E_+$ to $E_-$ (or $E_-$ to $E_+$), we can simply replace each $x$ to $\overline{x}$. The symmetry between them will greatly simplify our discussion later.

When we list all the subcases, we get the conclusion that any two elements from $$\{b,c_1,c_2,c_3,c_4,d_1,d_2, b',c_1',c_2',d_1',d_2'\}\cup \{\overline{b},\overline{c}_1,\overline{c}_2,\overline{c}_3,\overline{c}_4,\overline{d}_1,\overline{d}_2, \overline{b}',\overline{c}_1',\overline{c}_2',\overline{d}_1',\overline{d}_2'\}$$ are mutually commutative.

$\\$
\noindent {\bf Convention}: For convenience, later on in the second table, we exchange the order of the elements of all the terms. For example, $cd$ is changed to $dc$. So for an entry $xy$, $x$ always comes from resolving the first crossing point, $y$ always comes from resolving the second crossing point.

\bigskip\noindent {\bf Nontrivial cases.}
Now we are going to discuss the nontrivial cases. For simplicity, we use $A,B$ to denote the end of the first crossing $p$, and $C,D$ to denote the end of the second crossing $q$. We also use them to denote the oriented strands. For example, $(ACB,D)$ means that the three arcs $A,C,B$ are from a same link component, and their order is $A \to C \to B$ along the link orientation. $D$ is in another component. $(AC,B,D)$ means that the arcs $A,C$ are from a same link component, and their order is $A \to C$ along the link orientation. $B$ is in the second component, $D$ is in the third component.

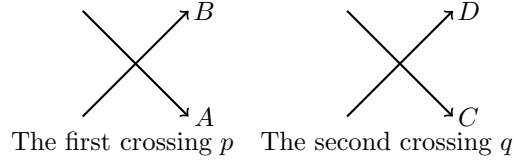
\begin{figure}[ht]
\begin{tikzpicture}
\draw [thick] [->]  (0pt,40pt) -- (40pt,0pt);
\draw [thick] [thick] (46pt,0pt) node   { $A$};
\draw [thick] [->] (0pt,0pt) -- (40pt,40pt);
\draw [thick] [thick] (46pt,40pt) node   { $B$};
\draw [thick] [thick] (15pt,-10pt) node   {The first crossing $p$};
\draw [thick]   [->] (100pt,0pt) -- (140pt,40pt);
\draw [thick] [thick] (146pt,0pt) node   { $C$};
\draw [thick]  [->] (100pt,40pt) -- (140pt,0pt);
\draw [thick] [thick] (146pt,40pt) node   { $D$};
\draw [thick] (115pt,-10pt) node  {The second crossing $q$};
\end{tikzpicture}
\caption{The label of two crossings.}\label{f4}
\end{figure}

Since there is a symmetry of positive/negative crossing both in the skein relation and the diagrams, we discuss only positive crossings cases. We shall tell how to deal with the other cases later.

For the nontrivial cases, there must be one link component passing through both the crossings $p$ and $q$. There are four arcs in the two disks containing the two crossings. They can belong to 1,2,3 link components. If there are three components, then all the possible cases are $(AC,B,D),(AD,B,C),(BC,A,D),(BD,A,C)$.

$\\$
\noindent {\bf Case 1: $(AC,B,D)$} If we resolve the 1st crossing point $p$ first, we shall get the followings.

$(E_+,E_+)=-\{(b'E_{-}+c_1'E+d_1'S,E_+)+(c_2'W+d_2'N,N_-)\}\\$
$-(b'E_{-},E_+)=b'\{(E_{-},b'E_{-}+c_1'E+d_1'S)+(N_+,c_2'W+d_2'N)\}\\$
$-(c'_1E,E_+)=c_1'\{(E,b'E_{-}+c_1'E+d_1'S)+(W,c_2'W+d_2'N)\}\\$
$-(d_1'S,E_+)=d_1'\{(S,b'E_{-}+c_1'E+d_1'S)+(N,c_2'W+d_2'N)\}\\$
$-(c_2'W,N_-)=c_2'\{(W,\overline{b}'N_{+}+\overline{c}_1'N+\overline{d}_2'W)+(E,\overline{c}_2'S+\overline{d}_1'E)\}\\$
$-(d_2'N,N_-)=d_2'\{(N,\overline{b}'N_{+}+\overline{c}_1'N+\overline{d}_2'W)+(S,\overline{c}_2'S+\overline{d}_1'E)\}$

\begin{table}[ht]
\caption{Case (AC,B,D), resolving $p$ first.}\label{tab:1a}
\begin{center}
\begin{tabular}[pos]{|l|l|l|l|l|l|l|l|l|l|l|l|}
\hline  p $\backslash $ q & E & $E_-$ & $N_+$ & $N$ & S & W   \\
\hline $E$ & $c_2'\overline{d}_1',c_1'c_1'$ & $c_1'b'$ &   &  &  $c_2'\overline{c_2}',c_1'd_1'$ &      \\
\hline $E_-$ & $b'c_1'$ & $b'b'$ &   &  &  $b'd_1'$ &     \\
\hline $N_+$ &   &  &   & $b'd_2'$ &    & $b'c_2'$     \\
\hline $N$ &   &  & $d_2'\overline{b}'$ & $d_1'd_2',d_2'\overline{c_1}'$ &    & $d_1'2',d_2'\overline{d}_2'$   \\
\hline $S$ & $d_2'\overline{d}_1',d_1'c_1'$ & $d_1'b'$ &   &  & $d_2'\overline{c_2}',d_1'd_1'$ &      \\
\hline $W$ &   &   & $c_2'\overline{b}'$ & $2'\overline{c_1}',c_1'd_2'$ &    & $c_2'\overline{d}_2',c_1'c_2'$    \\
\hline
\end{tabular}
\end{center}
\end{table}

Otherwise, we shall get the followings.

$(E_+,E_+)=-\{(E_+,b'E_{-}+c_1'E+d_1'S)+(N_-,c_2'W+d_2'N)\}\\$
$-b'(E_+,E_{-})=b'\{(b'E_{-}+c_1'E+d_1'S,E_{-})+(c_2'W+d_2'N,N_+)\}\\$
$-c'_1(E_+,E)=c_1'\{(b'E_{-}+c_1'E+d_1'S,E)+(c_2'W+d_2'N,W)\}\\$
$-d_1'(E_+,S)=d_1'\{(b'E_{-}+c_1'E+d_1'S,S)+(c_2'W+d_2'N,N)\}\\$
$-c_2'(N_-,W)=c_2'\{(\overline{b}'N_{+}+\overline{c}_1'N+\overline{d}_2'W,W)+(\overline{c}_2'S+\overline{d}_1'E,E)\}\\$
$-d_2'(N_-,N)=d_2'\{(\overline{b}'N_{+}+\overline{c}_1'N+\overline{d}_2'W,N)+(\overline{c}_2'S+\overline{d}_1'E,S)\}$

\begin{table}[ht]
\caption{Case (AC,B,D), resolving $q$ first.}\label{tab:1b}
\begin{center}
\begin{tabular}[pos]{|l|l|l|l|l|l|l|l|l|l|l|l|}
\hline  p $\backslash $ q & E & $E_-$ & $N_+$ & $N$ & S & W   \\
\hline $E$ & $\overline{d}_1'c_2',c_1'c_1'$ & $c_1'b'$ &   &  &  $\overline{d}'_1d_2',c_1'd_1'$ &      \\
\hline $E_-$ & $b'c_1'$ & $b'b'$ &   &  &  $b'd_1'$ &     \\
\hline $N_+$ &   &  &   & $\overline{b}'d_2'$ &    & $\overline{b}'c_2'$     \\
\hline $N$ &   &  & $d_2'b'$ & $d_2'd_1',\overline{c_1}'d_2'$ &    & $\overline{c_1}'c_2',d_2'c_1'$   \\
\hline $S$ & $\overline{c_2}'c_2',d_1'c_1'$ & $d_1'b'$ &   &  & $\overline{c_2}'d_2',d_1'd_1'$ &      \\
\hline $W$ &   &   & $c_2'b'$ & $\overline{d}_2'd_2',c_2'd_1'$ &    & $\overline{d}_2'c_2',c_2'c_1'$    \\
\hline
\end{tabular}
\end{center}
\end{table}

Recall that in the second matrix we write the products in a new form. For example,  $(N,N_+)$ has coefficient $b'd_2'$, but we write  $d_2'b'$ in the table. We exchange the order of every product in this matrix so that the first symbol, for example the $d_2'$ here, is always from resolving the first crossing point $p$.

\noindent The relations here are:   $c_2'\overline{d}_1'=\overline{d}_1'c_2'$, $c_2'\overline{c_2}'=\overline{d}'_1d_2'$, $b'd_2'=\overline{b}'d_2'$, $b'c_2'=\overline{b}'c_2'$, $d_2'\overline{b}'=d_2'b'$, $d_1'd_2'+d_2'\overline{c_1}'=d_2'd_1'+\overline{c_1}'d_2'$, $d_1'c_2'+d_2'\overline{d}_2'=\overline{c_1}'c_2'+d_2'c_1'$, $d_2'\overline{d}_1'=\overline{c_2}'c_2'$, $d_2'\overline{c_2}'=\overline{c_2}'d_2'$, $c_2'\overline{b}'=c_2'b'$, $c_2'\overline{c_1}'+c_1'd_2'=\overline{d}_2'd_2'+c_2'd_1'$, $c_2'\overline{d}_2'+c_1'c_2'=\overline{d}_2'c_2'+c_2'c_1'$.

\bigskip
If the two crossings are both negative crossings, then we change all the coefficients $x$ to $\overline{x}$. For example, $(E_+,E_+)=-\{(b'E_{-}+c_1'E+d_1'S,E_+)+(c_2'W+d_2'N,N_-)\}$ become $(E_-,E_-)=-\{(\overline{b}'E_{-}+\overline{c_1}'E+\overline{d_1}'S,E_-)+(\overline{c_2}'W+\overline{d_2}'N,N_+)\}.$ The relations become their conjugates. For example, $c_2'\overline{d}_1'=\overline{d}_1'c_2'$ become $\overline{c_2}'d_1'=d_1'\overline{c_2}'$.

If the first crossing $p$ is a negative crossing, $q$ is positive, then we change the first coefficients $x$ to $\overline{x}$. For example, $(E_+,E_+)=-\{(b'E_{-}+c_1'E+d_1'S,E_+)+(c_2'W+d_2'N,N_-)\}$ become $(E_-,E_+)=-\{(\overline{b}'E_{-}+\overline{c_1}'E+\overline{d_1}'S,E_+)+(\overline{c_2}'W+\overline{d_2}'N,N_-)\}.$ In the relations, we change the first variables to their conjugates. For example, $c_2'\overline{d}_1'=\overline{d}_1'c_2'$ become  $\overline{c_2}'\overline{d}_1'=d_1'c_2'$. Likewise, if the first crossing $p$ is a positive crossing, $q$ is negative, we get $c_2'd_1'=\overline{d}_1'\overline{c_2}'$. In short, if we have a relation $xy=cd$, we will add $\overline{x}y=\overline{c}d$, $x\overline{y}=c\overline{d}$ and $\overline{xy}=\overline{cd}$. We will refer to this as complete the relation by the $\overline{\ }$ operation.

To handle the $(BC,A,D)$ case, let's first introduce another conjugation induced by taking mirror image. Taking the mirror image of each term of our skein relation, $E_+,E_-,E,W,HC,HT,VC,\\ $ $VT,S,N$ are changed to  $E_-,E_+,E,W,HC,HT,VC,VT,S,N$ (see Fig. \ref{fig3}). Let $\widehat{c_3}=c_4,\widehat{c_4}=c_3,\widehat{d_1}=d_2,\widehat{d_2}=d_1,\widehat{d_1}'=d_2',\widehat{d_2}'=d_1'$. For other $x$, $\widehat{x}=x$. For the link $(BC,A,D)$, suppose the crossing $p$ is negative, $q$ is positive. Then we can change the disk at $p$ to its mirror image, and add virtual crossings. Then the new link is the case $(AC,B,D)$. In the new link, both crossings are positive. Although the new link  $(AC,B,D)$ contain virtual crossings, all the calculations we made before are still valid. There is a one to one correspondence between the results of complete resolving $(BC,A,D)$ and $(AC,B,D)$ at $p,q$. From the results of $(BC,A,D)$ to $(AC,B,D)$, the mirror takes $E_-$ to $E_+$ , $VC$ to $VT$ and so on. Since $E_-$ is mapped to $E_+$, we have to map $x$ to $\overline{x}$. Since $VC$ is mapped to $VT$, we have to map $c_3$ to $c_4=\widehat{c_3}$ and so on. Because the mirror is only placed near the first crossing $p$, in a relation $xy=cd$ we only change the first variables to get $\overline{\widehat{x}}y=\overline{\widehat{c}}d$. Therefor, if we have a relation $xy=cd$ from $(AC,B,D)$, we will add $\overline{\widehat{x}}y=\overline{\widehat{c}}d$ for $(BC,A,D)$. Since we also have $\overline{x}y=\overline{c}d$, we can say that if we have a relation $xy=cd$, we will add $\widehat{x}y=\widehat{c}d$.
Sinilarly, for $(AD,B,C),(BD,A,C)$, if we have a relation $xy=cd$, we will add $x\widehat{y}=c\widehat{d}$ and $\widehat{x}\widehat{y}=\widehat{c}\widehat{d}$. We will refer to this as complete the relation by the $\widehat{\ }$ operation.

In short, if there are three components, then all the possible cases are $(AC,B,D),(AD,B,C),\\$ $(BC,A,D),(BD,A,C)$, but we only need to calculate the case $(AC,B,D)$ and suppose that all crossings are positive.

\begin{figure}[ht]
\beginpgfgraphicnamed{graphic-of-flat-world}
\begin{tikzpicture}[scale=.7]
\draw [shift={(0,-3)}] [thick] [<-] (200pt,0pt) .. controls (215pt,20pt) and (225pt,20pt) .. (240pt,0pt); \draw [shift={(0,-3)}] [thick] (215pt,-10pt) node[text width=0.4pt] {$W$};
\draw [shift={(0,-3)}] [thick] [<-] (200pt,40pt) .. controls (215pt,20pt) and (225pt,20pt) .. (240pt,40pt);
\draw [shift={(-7,-3)}] [thick] [->] (200pt,0pt) .. controls (215pt,20pt) and (225pt,20pt) .. (240pt,0pt); \draw [shift={(-7,-3)}] [thick] (215pt,-10pt) node[text width=0.4pt] {$HT$};
\draw [shift={(-7,-3)}] [thick] [<-] (200pt,40pt) .. controls (215pt,20pt) and (225pt,20pt) .. (240pt,40pt);
\draw [shift={(-7,-3)}] [thick] [<-] (300pt,40pt) .. controls (320pt,25pt) and (320pt,15pt) .. (300pt,0pt); \draw[shift={(-7,-3)}]  [thick] (315pt,-10pt) node[text width=0.4pt] {$N$};
\draw [shift={(-7,-3)}] [thick]  [<-] (340pt,40pt) .. controls (320pt,25pt) and (320pt,15pt) .. (340pt,0pt);
\draw [shift={(0,-6)}] [thick] [<-] (200pt,0pt) .. controls (215pt,20pt) and (225pt,20pt) .. (240pt,0pt); \draw [shift={(0,-6)}] [thick] (215pt,-10pt) node[text width=0.4pt] {$W$};
\draw [shift={(0,-6)}] [thick] [<-] (200pt,40pt) .. controls (215pt,20pt) and (225pt,20pt) .. (240pt,40pt);

\draw [shift={(0,-3)}] [thick] [<-] (300pt,40pt) .. controls (320pt,25pt) and (320pt,15pt) .. (300pt,0pt); \draw[shift={(0,-3)}]  [thick] (315pt,-10pt) node[text width=0.4pt] {$VC$};
\draw [shift={(0,-3)}] [thick]  [->] (340pt,40pt) .. controls (320pt,25pt) and (320pt,15pt) .. (340pt,0pt);
\draw [shift={(-7,-6)}] [thick] [<-] (200pt,0pt) .. controls (215pt,20pt) and (225pt,20pt) .. (240pt,0pt); \draw [shift={(-7,-6)}] [thick] (215pt,-10pt) node[text width=0.4pt] {$HC$};
\draw [shift={(-7,-6)}] [thick] [->] (200pt,40pt) .. controls (215pt,20pt) and (225pt,20pt) .. (240pt,40pt);
\draw [shift={(-7,-6)}] [thick] [->] (300pt,40pt) .. controls (320pt,25pt) and (320pt,15pt) .. (300pt,0pt); \draw[shift={(-7,-6)}]  [thick] (315pt,-10pt) node[text width=0.4pt] {$S$};
\draw [shift={(-7,-6)}] [thick]  [->] (340pt,40pt) .. controls (320pt,25pt) and (320pt,15pt) .. (340pt,0pt);
\draw [shift={(0,-6)}] [thick] [->] (300pt,40pt) .. controls (320pt,25pt) and (320pt,15pt) .. (300pt,0pt); \draw[shift={(0,-6)}]  [thick] (315pt,-10pt) node[text width=0.4pt] {$VT$};
\draw [shift={(0,-6)}] [thick] [<-]  (340pt,40pt) .. controls (320pt,25pt) and (320pt,15pt) .. (340pt,0pt);
\draw [thick]  (0,-4) -- (12,-4); \node at (13,-4) {mirror};
\end{tikzpicture}
\endpgfgraphicnamed
\caption{Mirror symmetry.}
\label{fig3}
\end{figure}
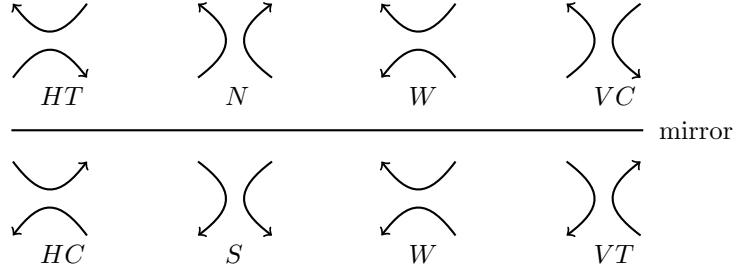

\bigskip

To get all the equations $f_{pq}=f_{qp}$, we shall list all the possible cases that how the two strands of $p$ is connected to the two strands of $q$. Up to the positive/negative crossing type symmetry,  and mirror symmetry, there are only few nontrivial cases. If there are only two components pass the two disks, up to symmetry, we have $(AC,BD)$, $(ABC,D)$. If there is only one components pass the two disks, up to symmetry, we have  $(ACBD)$ or $(ACDB)$.

So, we have the following five cases. 1. $(AC,B,D)$, 2. $(AC,BD)$, 3. $(ABC,D)$, 4. $(ACBD)$, 5. $(ACDB)$.

$\\$
\noindent {\bf Case 2, $(AC,BD)$} Resolving $p$ first, we shall get the following equations.

$(E_+,E_+)=-\{(b'E_{-}+c_1'E,E_+)+(c_2'W,W_+)+(d_2'N,N_-)+(d_1'S,S_-)\}\\$
$-(b'E_{-},E_+)=b'\{(E_{-},b'E_{-}+c_1'E)+(W_-,c_2'W)+(N_+,d_2'N)+(S_+,d_1'S)\}\\$
\noindent \resizebox{12.5cm}{!} {$-(c'_1E,E_+)=c_1'\{(E,bE_{-}+c_1E)+(W,c_2W)+(HT,c_3HC)+(HC,c_4HT)+(HC,d_1VC)+(HT,d_2VT)\}\\$}

\noindent \resizebox{12.5cm}{!} {$-(c_2'W,W_+)=c_2'\{(W,bW_{-}+c_1W)+(E,c_2E)+(HT,c_3HC)+(HC,c_4HT)+(HC,d_1VC)+(HT,d_2VT)\}\\$}

\noindent \resizebox{12.5cm}{!}
{$-(d_2'N,N_-)=d_2'\{(N,\overline{b}N_{+}+\overline{c}_1N)+(S,\overline{c}_2S)+(VT,\overline{c}_3VC)+(VC,\overline{c}_4VT)+(VC,\overline{d}_1HC)+(VT,\overline{d}_2HT)\}\\$}

\noindent \resizebox{12.5cm}{!} {$-(d_1'S,S_-)=d_1'\{(S,\overline{b}S_{+}+\overline{c}_1S)+(N,\overline{c}_2N)+(VT,\overline{c}_3VC)+(VC,\overline{c}_4VT)+(VC,\overline{d}_1HC)+(VT,\overline{d}_2HT)\}$}

\begin{table}[ht]
\caption{Case  $(AC,BD)$, resolving $p$ first.}\label{tab:2a}
\begin{center}
\resizebox{12.5cm}{!}{\begin{tabular}[pos]{|l|l|l|l|l|l|l|l|l|l|l|l|l|l|}
\hline  p $\backslash $ q & E & $E_-$ & W & $W_-$ & $N$ &$N_+$ & S & $S_+$ & HC & HT & VC & VT   \\
\hline $E$ & $c_2'c_2,c_1'c_1$ & $c_1'b$ &   &  &  &   & &&&&&  \\
\hline $E_-$ & $b'c_1'$ & $b'b'$ &   &  &  &   & &&&&&   \\
\hline $W$ &   &   & $c_2'c_1,c_1'c_2$ & $c_2'b$ &  &   & &&&&&   \\
\hline $W_-$ &   &   & $b'c_2'$ &  &  &   & &&&&&   \\
\hline $N$ &   &   &  &  & $d_1'\overline{c_2},d_2'\overline{c_1}$ & $d_2'\overline{b}$ & &&&&&   \\
\hline $N_+$ &   &   &  &  & $b'd_2'$ & & &&&&&   \\
\hline $S$ &   &   &  &  &   & & $d_1'\overline{c_1},d_2'\overline{c_2}$ & $d_1'\overline{b}$ &&&&   \\
\hline $S_+$ &   &   &  &  &   & & $b'd_1'$ &&&&&   \\
\hline $HC$ &   &   &  &  &   & &   & & & $c_1'c_4,c_2'c_4$ & $c_1'd_1,c_2'd_1$ &   \\
\hline $HT$ &   &   &  &  &   & &   & & $c_2'c_3,c_1'c_3$ & & & $c_2'd_2,c_1'd_2$ \\
\hline $VC$ &   &   &  &  &   & &   & & $d_2'\overline{d}_1,d_1'\overline{d}_1$ & & & $d_2'\overline{c_4},d_1'\overline{c_4}$  \\
\hline $VT$ &   &   &  &  &   & &   & & & $d_2'\overline{d}_2,d_1'\overline{d}_2$ & $d_1'\overline{c_3},d_2'\overline{c_3}$ &   \\
\hline
\end{tabular}}
\end{center}
\end{table}

Resolving $q$ first, we shall get the following equations.

$(E_+,E_+)=-\{(E_+,b'E_{-}+c_1'E)+(W_+,c_2'W)+(N_-,d_2'N)+(S_-,d_1'S)\}\\$
$-b'(E_+,E_{-})=b'\{(b'E_{-}+c_1'E,E_{-})+(c_2'W,W_-)+(d_2'N,N_+)+(d_1'S,S)\}\\$
\noindent \resizebox{12.5cm}{!} {$-c'_1(E_+,E)=c_1'\{(bE_{-}+c_1E,E)+(c_2W,W)+(c_3HC,HT)+(c_4HT,HC)+(d_1VC,HC)+(d_2VT,HT)\}\\$}

\noindent \resizebox{12.5cm}{!} {$-c_2'(W_+,W)=c_2'\{(bW_{-}+c_1W,W)+(c_2E,E)+(c_3HC,HT)+(c_4HT,HC)+(d_1VC,HC)+(d_2VT,HT)\}\\$}

\noindent \resizebox{12.5cm}{!} {$-d_2'(N_-,N)=d_2'\{(\overline{b}N_{+}+\overline{c}_1N,N)+(\overline{c}_2S,S)+(\overline{c}_3VC,VT)+(\overline{c}_4VT,VC)+(\overline{d}_1HC,VC)+(\overline{d}_2HT,VT)\}\\$}

\noindent \resizebox{12.5cm}{!} {$-d_1'(S_-,S)=d_1'\{(\overline{b}S_{+}+\overline{c}_1S,S)+(\overline{c}_2N,N)+(\overline{c}_3VC,VT)+(\overline{c}_4VT,VC)+(\overline{d}_1HC,VC)+(\overline{d}_2HT,VT)\}$}
\begin{table}[ht]
\caption{Case  $(AC,BD)$, resolving $q$ first.}\label{tab:2b}
\begin{center}
\resizebox{12.5cm}{!}{\begin{tabular}[pos]{|l|l|l|l|l|l|l|l|l|l|l|l|l|l|}
\hline  p $\backslash $ q & E & $E_-$ & W & $W_-$ & $N$ &$N_+$ & S & $S_+$ & HC & HT & VC & VT   \\
\hline $E$ & $c_2c_2',c_1c_1'$ & $c_1'b'$ &   &  &  &   & &&&&&  \\
\hline $E_-$ & $bc_1'$ & $b'b'$ &   &  &  &   & &&&&&   \\
\hline $W$ &   &   & $c_1c_2',c_2c_1'$ & $c_2'b'$ &  &   & &&&&&   \\
\hline $W_-$ &   &   & $bc_2'$ &  &  &   & &&&&&   \\
\hline $N$ &   &   &  &  & $\overline{c_2}d_1',\overline{c_1}d_2'$ & $d_2'b'$ & &&&&&   \\
\hline $N_+$ &   &   &  &  & $\overline{b}d_2'$ & & &&&&&   \\
\hline $S$ &   &   &  &  &   & & $\overline{c_1}d_1',\overline{c_2}d_2'$ & $d_1'b'$ &&&&   \\
\hline $S_+$ &   &   &  &  &   & & $\overline{b}d_1'$ &&&&&   \\
\hline $HC$ &   &   &  &  &   & &   & & & $c_3c_1',c_3c_2'$ & $\overline{d}_1d_2',\overline{d}_1d_1'$ &   \\
\hline $HT$ &   &   &  &  &   & &   & & $c_4c_1',c_4c_2'$ & & & $\overline{d}_2d_2',\overline{d}_2d_1'$ \\
\hline $VC$ &   &   &  &  &   & &   & & $d_1c_1',d_1c_2'$ & & & $\overline{c_3}d_1',\overline{c_3}d_2'$  \\
\hline $VT$ &   &   &  &  &   & &   & & & $d_2c_1',d_2c_2'$ & $\overline{c_4}d_1',\overline{c_4}d_2'$ &   \\
\hline
\end{tabular}}
\end{center}
\end{table}

\noindent The relations here are:  $b'c_1'=bc_1'$, $c_2'c_1+c_1'c_2=c_1c_2'+c_2c_1'$, $c_2'c_2+c_1'c_1=c_2c_2'+c_1c_1'$,$b'c_2'=bc_2'$, $d_1'\overline{c_2}+d_2'\overline{c_1}=\overline{c_2}d_1'+\overline{c_1}d_2'$, $d_2'\overline{b}=d_2'b'$, $b'd_2'=\overline{b}d_2'$, $d_1'\overline{c_1}+d_2'\overline{c_2}=\overline{c_1}d_1'+\overline{c_2}d_2'$, $d_1'\overline{b}=d_1'b'$, $b'd_1'=\overline{b}d_1'$, $c_1'c_4+c_2'c_4=c_3c_1'+c_3c_2'$,$c_1'd_1+c_2'd_1=\overline{d}_1d_2'+\overline{d}_1d_1'$, $c_2'c_3+c_1'c_3=c_4c_1'+c_4c_2'$, $c_2'd_2+c_1'd_2=\overline{d}_2d_2'+\overline{d}_2d_1'$, $d_2'\overline{d}_1+d_1'\overline{d}_1=d_1c_1'+d_1c_2'$, $d_2'\overline{c_4}+d_1'\overline{c_4}=\overline{c_3}d_1'+\overline{c_3}d_2'$, $d_2'\overline{d}_2+d_1'\overline{d}_2=d_2c_1'+d_2c_2'$, $d_1'\overline{c_3}+d_2'\overline{c_3}=\overline{c_4}d_1'+\overline{c_4}d_2'$.

$\\$
\noindent {\bf Case 3, $(ABC,D)$} Resolving $p$ first, we shall get the following equations.

$(E_+,E_+)=-\{(bE_{-}+c_1E+c_3HC+d_2VT,E_+)+(c_2W+c_4HT+d_1VC,N_-)\}\\$
$-(bE_{-},E_+)=b\{(E_{-},b'E_{-}+c_1'E+d_1'S)+(W_{-},c_2'W+d_2'N)\}\\$
$-(c_1E,E_+)=c_1\{(E,b'E_{-}+c_1'E+d_1'S)+(HT,c_2'W+d_2'N)\}\\$
$-(c_3HC,E_+)=c_3\{(HC,b'E_{-}+c_1'E+d_1'S)+(W,c_2'W+d_2'N)\}\\$
$-(d_2VT,E_+)=d_2\{(VT,b'E_{-}+c_1'E+d_1'S)+(VC,c_2'W+d_2'N)\}\\$
$-(c_2W,N_-)=c_2\{(W,\overline{b}'N_{+}+\overline{c}_1'N+\overline{d}_2'W)+(HC,\overline{c}_2'S+\overline{d}_1'E)\}\\$
$-(c_4HT,N_-)=c_4\{(HT,\overline{b}'N_{+}+\overline{c}_1'N+\overline{d}_2'W)+(E,\overline{c}_2'S+\overline{d}_1'E)\}\\$
$-(d_1VC,N_-)=d_1\{(VC,\overline{b}'N_{+}+\overline{c}_1'N+\overline{d}_2'W)+(VT,\overline{c}_2'S+\overline{d}_1'E)\}\\$

\begin{table}[ht]
\caption{Case $(ABC,D)$, resolving $p$ first.}\label{tab:3a}
\begin{center}
\begin{tabular}[pos]{|l|l|l|l|l|l|l|l|l|l|l|l|}
\hline  p $\backslash $ q & E & $E_-$ & $W$ & $N$ & $N_+$ & S  \\
\hline $E$ & $c_4\overline{d}_1',c_1c_1'$ & $c_1b'$ &   &  &  &  $c_4\overline{c_2}',c_1d_1'$ \\
\hline $E_-$ & $bc_1'$ & $bb'$ &   &  &   &  $bd_1'$ \\
\hline $W$ &   &   & $c_2\overline{d}_2',c_3c_2'$ & $c_2\overline{c_1}',c_3d_2'$ & $c_2\overline{b}'$ &     \\
\hline $W_-$ &   &   & $bc_2'$ & $bd_2'$ &    &    \\
\hline $HC$ & $c_2\overline{d}_1',c_3c_1'$ & $c_3b'$ &   &   &    & $c_2\overline{c_2}',c_3d_1'$     \\
\hline $HT$ &   &  & $c_4\overline{d}_2',c_1c_2'$ & $c_4\overline{c_1}',c_1d_2'$ & $c_4\overline{b}'$ &    \\
\hline $VC$ &   &   & $d_1\overline{d}_2',d_2c_2'$ & $d_1\overline{c_1}',d_2d_2'$ & $d_1\overline{b}'$ &      \\
\hline $VT$ & $d_1\overline{d}_1',d_2c_1'$ & $d_2b'$ &   &  &  & $d_1\overline{c_2}',d_2d_1'$   \\
\hline
\end{tabular}
\end{center}
\end{table}

Resolving $q$ first, we shall get the following equations.

$(E_+,E_+)=-\{(E_+,b'E_{-}+c_1'E+d_1'S)+(W_+,c_2'W+d_2'N)\}\\$
$-(E_+,b'E_{-})=b'\{(bE_{-}+c_1E+c_3HC+d_2VT,E_{-})+(c_2W+c_4HT+d_1VC,N_{+})\}\\$
$-(E_+,c_1'E)=c_1'\{(bE_{-}+c_1E+c_3HC+d_2VT,E)+(c_2W+c_4HT+d_1VC,W)\}\\$
$-(E_+,d_1'S)=d_1'\{(bE_{-}+c_1E+c_3HC+d_2VT,S)+(c_2W+c_4HT+d_1VC,N)\}\\$
$-(W_+,c_2'W)=c_2'\{(bW_{-}+c_1W+c_4HT+d_1VC,W)+(c_2E+c_3HC+d_2VT,E)\}\\$
$-(W_+,d_2'N)=d_2'\{(bW_{-}+c_1W+c_4HT+d_1VC,N)+(c_2E+c_3HC+d_2VT,S)\}\\$

\begin{table}[ht]
\caption{Case $(ABC,D)$, resolving $q$ first.}\label{tab:3b}
\begin{center}
 \begin{tabular}[pos]{|l|l|l|l|l|l|l|l|l|l|l|l|}
\hline  p $\backslash $ q & E & $E_-$ & $W$ & $N$ & $N_+$ & S  \\
\hline $E$ & $c_2c_2',c_1c_1'$ & $c_1b'$ &   &  &  &  $c_2d_2',c_1d_1'$ \\
\hline $E_-$ & $bc_1'$ & $bb'$ &   &  &   &  $bd_1'$ \\
\hline $W$ &   &   & $c_1c_2',c_2c_1'$ & $c_1d_2',c_2d_1'$ & $c_2b'$ &     \\
\hline $W_-$ &   &   & $bc_2'$ & $bd_2'$ &    &    \\
\hline $HC$ & $c_3c_2',c_3c_1'$ & $c_3b'$ &   &   &    & $c_3d_2',c_3d_1'$     \\
\hline $HT$ &   &  & $c_4c_2',c_4c_1'$ & $c_4d_2',c_4d_1'$ & $c_4b'$ &    \\
\hline $VC$ &   &   & $d_1c_2',d_11'$ & $d_1d_2',d_1d_2'$ & $d_1b'$ &      \\
\hline $VT$ & $d_2c_2',d_2c_1'$ & $d_2b'$ &   &  &  & $d_2d_2',d_2d_1'$   \\
\hline
\end{tabular}
\end{center}
\end{table}

\noindent The relations here are:  $c_4\overline{d}_1'=c_2c_2'$, $c_4\overline{c_2}'=c_2d_2'$, $c_2\overline{d}_2'+c_3c_2'=c_1c_2'+c_2c_1'$, $c_2\overline{c_1}'+c_3d_2'=c_1d_2'+c_2d_1'$, $c_2\overline{b}'=c_2b'$, $c_2\overline{d}_1'=c_3c_2'$,  $c_2\overline{c_2}'=c_3d_2'$, $c_4\overline{d}_2'+c_1c_2'=c_4c_2'+c_4c_1'$, $c_4\overline{c_1}'+c_1d_2'=c_4d_2'+c_4d_1'$, $c_4\overline{b}'=c_4b'$, $d_1\overline{d}_2'+d_2c_2'=d_1c_2'+d_1c_1'$, $d_1\overline{c_1}'+d_2d_2'=d_1d_2'+d_1d_2'$, $d_1\overline{b}'=d_1b'$, $d_1\overline{d}_1'=d_2c_2'$, $d_1\overline{c_2}'=d_2d_2'$.

$\\$
\noindent {\bf Case 4, $(ACBD)$} Resolving $p$ first, we shall get the following equations.

\noindent \resizebox{12.5cm}{!} {$(E_+,E_+)=-\{(bE_{-}+c_1E,E_+)+(c_2W,W_+)+(c_4HT,S_-)+(c_3HC,N_-)+(d_2VT,N_-)+(d_1VC,S_-)\}\\$}

\noindent \resizebox{12.5cm}{!} {$-(bE_{-},E_+)=b\{(E_{-},bE_{-}+c_1E)+(W_-,c_2W)+(S_+,c_3HC)+(N_+,c_4HT)+(S_+,d_2VT)+(N_+,d_1VC)\}\\$}

\noindent $-(c_1E,E_+)=c_1\{(E,b'E_{-}+c_1'E)+(W,c_2'W)+(HC,d_2'N)+(HT,d_1'S)\}\\$
$-(c_2W,W_+)=c_2\{W,b'W_{-}+c_1'W)+(E,c_2'E)+(HC,d_2'N)+(HT,d_1'S)\}\\$
$-(c_4HT,S_-)=c_4\{(HT,\overline{b}'S_{+}+\overline{c}_1'S)+(HC,\overline{c}_2'N)+(E,\overline{d}_2'E)+(W,\overline{d}_1'W)\}\\$
$-(c_3HC,N_-)=c_3\{(HC,\overline{b}'N_{+}+\overline{c}_1'N)+(HT,\overline{c}_2'S)+(E,\overline{d}_1'E)+(W,\overline{d}_2'W)\}\\$
\noindent \resizebox{12.5cm}{!} {$-(d_2VT,N_-)=d_2\{(VT,\overline{b}N_{+}+\overline{c}_1N)+(VC,\overline{c}_2S)+(N,\overline{c}_3VC)+(S,\overline{c}_4VT)+(S,\overline{d}_1HC)+(N,\overline{d}_2HT)\}\\$}

\noindent \resizebox{12.5cm}{!} {$-(d_1VC,S_-)=d_1\{(VC,\overline{b}S_{+}+\overline{c}_1S)+(VT,\overline{c}_2N)+(N,\overline{c}_3VC)+(S,\overline{c}_4VT)+(S,\overline{d}_1HC)+(N,\overline{d}_2HT)\}\\$}

\begin{table}[ht]
\caption{Case $(ACBD)$, resolving $p$ first.}\label{tab:4a}
\begin{center}
\resizebox{12.5cm}{!}{
\begin{tabular}[pos]{|l|l|l|l|l|l|l|l|l|l|l|l|l|l|l|l|}
\hline  p $\backslash $ q & E & $E_-$ & W & $W_-$ & S & $S_+$ & N & $N_+$ & HC & HT & VC & VT  \\
\hline $E$ & $c_4\overline{d}_2',c_3\overline{d}_1'\atop c_2c_2',c_1c_1'$ & $c_1b'$ &   &  &    &   &   &  & &  & & \\
\hline $E_-$ & $bc_1$ & $bb$ &  &   &  &   &   &   &   &   &   &   \\
\hline $W$ &   &   & $c_4\overline{d}_1',c_3\overline{d}_2'\atop c_2c_1',c_1c_2'$  & $c_2b'$ &   &   &   &   &   &  &   &   \\
\hline $W_-$ &   &   & $bc_2$ &   &   &   &   &   &   &   &   &   \\
\hline $S$ &   &   &    &   &   &  &   &  & $d_2\overline{d}_1,d_1\overline{d}_1$ &   &   & $d_1\overline{c_4},d_2\overline{c_4}$ \\
\hline $S_+$ &   &  &  &   &  &   &   &   & $bc_3$ &   &   & $bd_2$ \\
\hline $N$ &  &  &  &  &   &   &  &   &   & $d_2\overline{d}_2,d_1\overline{d}_2$ & $d_1\overline{c_3},d_2\overline{c_3}$ &   \\
\hline $N_+$ &  &   &   &  &  &   &  &  &   & $bc_4$ & $bd_1$ &   \\
\hline $HC$ &   &  &   &   &  &  & $c_3\overline{c_1}',c_4\overline{c_2}'\atop c_2d_2',c_1d_2'$ & $c_3\overline{b}'$ &   &   &   &   \\
\hline $HT$ &  &  &   &   & $c_4\overline{c_1}',c_3\overline{c_2}'\atop c_2d_1',c_1d_1'$ & $c_4\overline{b}'$ &   &   &   &   &   &  \\
\hline $VC$ &  &   &    &   & $d_1\overline{c_1},d_2\overline{c_2}$ & $d_1\overline{b}$ &   &   &   &   &   &   \\
\hline $VT$ &  &   &    &   &   &  & $d_2\overline{c_1},d_1\overline{c_2}$ & $d_2\overline{b}$ &   &  &   &   \\
\hline
\end{tabular}}
\end{center}
\end{table}

Resolving $q$ first, we shall get the following equations.

\noindent \resizebox{12.5cm}{!} {$(E_+,E_+)=-\{(E_+,bE_{-}+c_1E)+(W_+,c_2W)+(S_-,c_3HC)+(N_-,c_4HT)+(S_-,d_2VT)+(N_-,d_1VC)\}\\$}

\noindent \resizebox{12.5cm}{!} {$-(E_+,bE_{-})=b\{(bE_{-}+c_1E,E_-)+(c_2W,W_-)+(c_4HT,S_+)+(c_3HC,N_+)+(d_2VT,N_{+})+(d_1VC,S_+)\}\\$}

\noindent $-(E_+,c_1E)=c_1\{(b'E_{-}+c_1'E,E)+(c_2'W,W)+(d_2'N,HT)+(d_1'S,HC)\}\\$
$-(W_+,c_2W)=c_2\{(b'W_{-}+c_1'W,W)+(c_2'E,E)+(d_1'N,HT)+(d_2'S,HC)\}\\$
$-(S_-,c_3HC)=c_3\{(\overline{b}'S_{+}+\overline{c}_1'S,HC)+(\overline{c}_2'N,HT)+(\overline{d}_2'E,E)+(\overline{d}_1'W,W)\}\\$
$-(N_-,c_4HT)=c_4\{(\overline{b}'N_{+}+\overline{c}_1'N,HT)+(\overline{c}_2'S,HC)+(\overline{d}_1'E,E)+(\overline{d}_2'W,W)\}\\$
\noindent \resizebox{12.5cm}{!} {$-(S_-,d_2VT)=d_2\{(\overline{b}S_{+}+\overline{c}_1S,VT)+(\overline{c}_2N,VC)+(\overline{c}_4VT,N)+(\overline{c}_3VC,S)+(\overline{d}_1HC,N)+(\overline{d}_2HT,S)\}\\$}

\noindent \resizebox{12.5cm}{!} {$-(N_-,d_1VC)=d_1\{(\overline{b}N_{+}+\overline{c}_1N,VC)+(\overline{c}_2S,VT)+(\overline{c}_4VT,N)+(\overline{c}_3VC,S)+(\overline{d}_1HC,N)+(\overline{d}_2HT,S)\}\\$}

\begin{table}[ht]
\caption{Case $(ACBD)$, resolving $q$ first.}\label{tab:4b}
\begin{center}
\resizebox{12.5cm}{!}{\begin{tabular}[pos]{|l|l|l|l|l|l|l|l|l|l|l|l|l|l|l|l|}
\hline  p $\backslash $ q & E & $E_-$ & W & $W_-$ & S & $S_+$ & N & $N_+$ & HC & HT & VC & VT  \\
\hline $E$ & $\overline{d}_2'c_3,\overline{d}_1'c_4 \atop c_2'c_2,c_1'c_1$ & $c_1b$ &   &  &    &   &   &  & &  & & \\
\hline $E_-$ & $b'c_1$ & $bb$  &  &   &  &   &   &   &   &   &   &   \\
\hline $W$ &   &   & $\overline{d}_1'c_3,\overline{d}_2'c_4 \atop c_2'c_1,c_1'c_2$  & $c_2b$ &   &   &  &   &   &   &  &   \\
\hline $W_-$ &  &  & $b'c_2$  &   &   &  &  &  &   &   &   &  \\
\hline $S$ &  &   &   &  &   &   &   &   & $\overline{c_1}'c_3,\overline{c_2}'c_4\atop d_2'c_2,d_1'c_1$ &   &   & $\overline{c_1}d_2,\overline{c_2}d_1$ \\
\hline $S_+$ &   &   &   &   &   &   &   &   & $\overline{b}'c_3$ &  &   & $\overline{b}d_2$ \\
\hline $N$ &  &   &   &  &  &   &   &   &   & $\overline{c_1}'c_4,\overline{c_2}'c_3\atop d_1'c_2,d_2'c_1$ & $\overline{c_1}d_1,\overline{c_2}d_2$ &   \\
\hline $N_+$ &  &   &   &  &  &   &   &   &   &  $\overline{b}'c_4$ & $\overline{b}d_1$ &  \\
\hline $HC$ &   &   &    &   &   &   & $\overline{d}_1d_2,\overline{d}_1d_1$ & $c_3b$ &   &   &   &   \\
\hline $HT$ &   &   &    &   & $\overline{d}_2d_2,\overline{d}_2d_1$ & $c_3b$ &   &   &   &   &   &  \\
\hline $VC$ &   &   &    &   & $\overline{c_3}d_1,\overline{c_3}d_2$ & $d_1b$ &   &   &   &   &   &  \\
\hline $VT$ &   &   &   &   &   &   & $\overline{c_4}d_1,\overline{c_4}d_2$ & $d_2b$ &   &   &   &  \\
\hline
\end{tabular}}
\end{center}
\end{table}

\noindent The relations here are: $c_4\overline{d}_2'+c_3\overline{d}_1'+ c_2c_2'+c_1c_1'=\overline{d}_2'c_3+\overline{d}_1'c_4 + c_2'c_2+c_1'c_1$, $c_1b'=c_1b$, $bc_1=b'c_1$, $c_4\overline{d}_1'+c_3\overline{d}_2'+ c_2c_1'+c_1c_2'=\overline{d}_1'c_3+\overline{d}_2'c_4 + c_2'c_1+c_1'c_2$, $c_2b'=c_2b$, $bc_2=b'c_2$, $d_2\overline{d}_1+d_1\overline{d}_1=\overline{c_1}'c_3+\overline{c_2}'c_4+ d_2'c_2+d_1'c_1$, $d_1\overline{c_4}+d_2\overline{c_4}=\overline{c_1}d_2+\overline{c_2}d_1$,
$bc_3=\overline{b}'c_3$, $bd_2=\overline{b}d_2$, $d_2\overline{d}_2+d_1\overline{d}_2=\overline{c_1}'c_4+\overline{c_2}'c_3+ d_1'c_2+d_2'c_1$, $d_1\overline{c_3}+d_2\overline{c_3}= \overline{c_1}d_1+\overline{c_2}d_2$, $bc_4= \overline{b}'c_4$, $bd_1=\overline{b}d_1$, $c_3\overline{c_1}'+c_4\overline{c_2}'+ c_2d_2'+c_1d_2'=\overline{d}_1d_2+\overline{d}_1d_1$, $c_4\overline{b}'=c_3b$, $d_1\overline{c_1}+d_2\overline{c_2}=\overline{c_3}d_1+\overline{c_3}d_2$, $d_1\overline{b}=d_1b$, $d_2\overline{c_1}+d_1\overline{c_2}=\overline{c_4}d_1+\overline{c_4}d_2$, $d_2\overline{b}=d_2b$.

$\\$
\noindent {\bf Case 5, $(ACDB)$} Resolving $p$ first, we shall get the following equations.

$(E_+,E_+)=-\{(bE_{-}+c_1E+c_4HT+d_1VC,E_+)+(c_2W+c_3HC+d_2VT,W_+)\}\\$
$-(bE_{-},E_+)=b\{(E_{-},bE_{-}+c_1E+c_3HC+d_2VT)+(W_{-},c_2W+c_4HT+d_1VC)\}\\$
$-(c_1E,E_+)=c_1\{(E,bE_{-}+c_1E+c_3HC+d_2VT)+(HC,c_2W+c_4HT+d_1VC)\}\\$
$-(c_4HT,E_+)=c_4\{(HT,bE_{-}+c_1E+c_3HC+d_2VT)+(W,c_2W+c_4HT+d_1VC)\}\\$
$-(d_1VC,E_+)=d_1\{(VC,bE_{-}+c_1E+c_3HC+d_2VT)+(VT,c_2W+c_4HT+d_1VC)\}\\$
$-(c_2W,W_+)=c_2\{(W,bW_{-}+c_1W+c_4HT+d_1VC)+(HT,c_2E+c_3HC+d_2VT)\}\\$
$-(c_3HC,W_+)=c_3\{(HC,bW_{-}+c_1W+c_4HT+d_1VC)+(E,c_2E+c_3HC+d_2VT)\}\\$
$-(d_2VT,W_+)=d_2\{(VT,bW_{-}+c_1W+c_4HT+d_1VC)+(VC,c_2E+c_3HC+d_2VT)\}$

\begin{table}[ht]
\caption{Case $(ACDB)$, resolving $p$ first.}\label{tab:5a}
\begin{center}
\begin{tabular}[pos]{|l|l|l|l|l|l|l|l|l|}
\hline  p $\backslash $ q & $E_-$ & E & $W_-$ & W & HC & HT & VC & VT \\
\hline $E_-$ & $bb$ & $bc_1$ &   &  & $bc_3$ &  &  & $bd_2$  \\
\hline E & $c_1b$ & $c_3c_2,c_1c_1$ &   &  &  $c_3c_3,c_1c_3$ &&& $c_3d_2,c_1d_2$    \\
\hline $W_-$ &  &   &   & $bc_2$ &  & $bc_4$ & $bd_1$ &    \\
\hline W &   &   & $c_2b$ & $c_2c_1,c_4c_2$ &  & $c_2c_4,c_4c_3$ & $c_2d_1,c_4d_1$ &    \\
\hline HC &   &   & $c_3b$ & $c_3c_1,c_1c_2$  &    & $c_3c_4,c_1c_4$ & $c_3d_1,c_1d_1$ &     \\
\hline HT & $c_4b$  & $c_2c_2,c_4c_1$ &   &  & $c_2c_3,c_4c_3$ &&& $c_2d_2,c_4d_2$  \\
\hline VC & $d_1b$ & $d_1c_1,d_2c_2$ &   &  & $d_1c_3,d_2c_3$ &&& $d_1d_2,d_2d_2$  \\
\hline VT &   &   & $d_2b$ & $d_2c_1,d_1c_2$ &  & $d_1c_4,d_2c_4$ & $d_1d_1,d_2d_1$ &     \\
 \hline
\end{tabular}
\end{center}
\end{table}
Resolving $q$ first, we shall get the following equations.

$(E_+,E_+)=-\{(E_+,bE_{-}+c_1E+c_3HC+d_2VT)+(W_+,c_2W+c_4HT+d_1VC)\} \\$
$-(E_+,bE_{-})=b\{(bE_{-}+c_1E+c_4HT+d_1VC,E_{-})+(c_2W+c_3HC+d_2VT,W_{-})\}\\$
$-(E_+,c_1E)=c_1\{(bE_{-}+c_1E+c_4HT+d_1VC,E)+(c_2W+c_3HC+d_2VT,HT)\}\\$
$-(E_+,c_3HC)=c_3\{(bE_{-}+c_1E+c_4HT+d_1VC,HC)+(c_2W+c_3HC+d_2VT,W)\}\\$
$-(E_+,d_2VT)=d_2\{(bE_{-}+c_1E+c_4HT+d_1VC,VT)+(c_2W+c_3HC+d_2VT,VC)\}\\$
$-(W_+,c_2W)=c_2\{(bW_{-}+c_1W+c_3HC+d_2VT,W)+(c_2E+c_4HT+d_1VC,HC)\}\\$
$-(W_+,c_4HT)=c_4\{(bW_{-}+c_1W+c_3HC+d_2VT,HT)+(c_2E+c_4HT+d_1VC,E)\}\\$
$-(W_+,d_1VC)=d_1\{(bW_{-}+c_1W+c_3HC+d_2VT,VC)+(c_2E+c_4HT+d_1VC,VT)\}.\\$

\begin{table}[ht]
\caption{Case $(ACDB)$, resolving $q$ first.}\label{tab:5b}
\begin{center}
\begin{tabular}[pos]{|l|l|l|l|l|l|l|l|l|}
\hline  p $\backslash $ q & $E_-$ & E & $W_-$ & W & HC & HT & VC & VT \\
\hline $E_-$ & $bb$ & $bc_1$ &   &  & $bc_3$ &  &  & $bd_2$  \\
\hline E & $c_1b$ & $c_4c_2,c_1c_1$ &   &  &  $c_2c_2,c_1c_3$ &&& $c_2d_1,c_1d_2$    \\
\hline $W_-$ &  &   &   & $bc_2$ &  & $bc_3$ & $bd_1$ &    \\
\hline W &   &   & $c_2b$ & $c_1c_2,c_2c_3$ &  & $c_1c_3,c_2c_1$ & $c_1d_1,c_2d_2$ &    \\
\hline HC &   &   & $c_3b$ & $c_3c_2,c_3c_3$  &    & $c_3c_3,c_3c_1$ & $c_3d_1,c_4d_2$ &     \\
\hline HT & $c_4b$  & $c_4c_3,c_4c_1$ &   &  & $c_4c_2,c_4c_3$ &&& $c_4d_2,c_4d_1$  \\
\hline VC & $d_1b$ & $d_1c_3,d_1c_1$ &   &  & $d_1c_2,d_1c_3$ &&& $d_1d_1,d_1d_2$  \\
\hline VT &   &   & $d_2b$ & $d_2c_2,d_2c_3$ &  & $d_2c_3,d_2c_1$ & $d_2d_1,d_2d_2$ &     \\
 \hline
\end{tabular}
\end{center}
\end{table}

\noindent The relations here are: $c_3c_2=c_4c_2$, $c_3c_3=c_2c_2$, $c_3d_2=c_2d_1$, $bc_3=bc_4$, $c_2c_1+c_4c_2=c_1c_2+c_2c_3$, $c_2c_4+c_4c_3=c_1c_3+c_2c_1$, $c_2d_1+c_4d_1=c_1d_1+c_2d_2$, $c_3c_1+c_1c_2=c_3c_2+c_3c_3$, $c_3c_4+c_1c_4=c_3c_4+c_3c_1$, $c_1d_1=c_4d_2$, $c_4c_3=c_2c_2$, $c_2c_3=c_4c_2$, $c_2d_2=c_4d_1$, $d_2c_2=d_1c_3$, $d_2c_3=d_1c_2$, $d_2d_2=d_1d_1$, $d_2c_1+d_1c_2=d_2c_2+d_2c_3$, $d_1c_4+d_2c_4=d_2c_3+d_2c_1$.

\bigskip In short, here are all the relations if the two crossings are all positive.

 \noindent {\bf Case 1:}  $c_2'\overline{d}_1'=\overline{d}_1'c_2'$, $c_2'\overline{c_2}'=\overline{d}'_1d_2'$, $b'd_2'=\overline{b}'d_2'$, $b'c_2'=\overline{b}'c_2'$, $d_2'\overline{b}'=d_2'b'$, $d_1'd_2'+d_2'\overline{c_1}'=d_2'd_1'+\overline{c_1}'d_2'$, $d_1'c_2'+d_2'\overline{d}_2'=\overline{c_1}'c_2'+d_2'c_1'$, $d_2'\overline{d}_1'=\overline{c_2}'c_2'$, $d_2'\overline{c_2}'=\overline{c_2}'d_2'$, $c_2'\overline{b}'=c_2'b'$, $c_2'\overline{c_1}'+c_1'd_2'=\overline{d}_2'd_2'+c_2'd_1'$, $c_2'\overline{d}_2'+c_1'c_2'=\overline{d}_2'c_2'+c_2'c_1'$.

 \noindent {\bf Case 2:}  $b'c_1'=bc_1'$, $c_2'c_1+c_1'c_2=c_1c_2'+c_2c_1'$, $c_2'c_2+c_1'c_1=c_2c_2'+c_1c_1'$, $b'c_2'=bc_2'$, $d_1'\overline{c_2}+d_2'\overline{c_1}=\overline{c_2}d_1'+\overline{c_1}d_2'$, $d_2'\overline{b}=d_2'b'$, $b'd_2'=\overline{b}d_2'$, $d_1'\overline{c_1}+d_2'\overline{c_2}=\overline{c_1}d_1'+\overline{c_2}d_2'$, $d_1'\overline{b}=d_1'b'$, $b'd_1'=\overline{b}d_1'$, $c_1'c_4+c_2'c_4=c_3c_1'+c_3c_2'$,$c_1'd_1+c_2'd_1=\overline{d}_1d_2'+\overline{d}_1d_1'$, $c_2'c_3+c_1'c_3=c_4c_1'+c_4c_2'$, $c_2'd_2+c_1'd_2=\overline{d}_2d_2'+\overline{d}_2d_1'$, $d_2'\overline{d}_1+d_1'\overline{d}_1=d_1c_1'+d_1c_2'$, $d_2'\overline{c_4}+d_1'\overline{c_4}=\overline{c_3}d_1'+\overline{c_3}d_2'$, $d_2'\overline{d}_2+d_1'\overline{d}_2=d_2c_1'+d_2c_2'$, $d_1'\overline{c_3}+d_2'\overline{c_3}=\overline{c_4}d_1'+\overline{c_4}d_2'$.

 \noindent {\bf Case 3:}  $c_4\overline{d}_1'=c_2c_2'$, $c_4\overline{c_2}'=c_2d_2'$, $c_2\overline{d}_2'+c_3c_2'=c_1c_2'+c_2c_1'$, $c_2\overline{c_1}'+c_3d_2'=c_1d_2'+c_2d_1'$, $c_2\overline{b}'=c_2b'$, $c_2\overline{d}_1'=c_3c_2'$,  $c_2\overline{c_2}'=c_3d_2'$, $c_4\overline{d}_2'+c_1c_2'=c_4c_2'+c_4c_1'$, $c_4\overline{c_1}'+c_1d_2'=c_4d_2'+c_4d_1'$, $c_4\overline{b}'=c_4b'$, $d_1\overline{d}_2'+d_2c_2'=d_1c_2'+d_1c_1'$, $d_1\overline{c_1}'+d_2d_2'=d_1d_2'+d_1d_2'$, $d_1\overline{b}'=d_1b'$, $d_1\overline{d}_1'=d_2c_2'$, $d_1\overline{c_2}'=d_2d_2'$.

 \noindent {\bf Case 4:}  $c_4\overline{d}_2'+c_3\overline{d}_1'+ c_2c_2'+c_1c_1'=\overline{d}_2'c_3+\overline{d}_1'c_4 + c_2'c_2+c_1'c_1$, $c_1b'=c_1b$, $bc_1=b'c_1$, $c_4\overline{d}_1'+c_3\overline{d}_2'+ c_2c_1'+c_1c_2'=\overline{d}_1'c_3+\overline{d}_2'c_4 + c_2'c_1+c_1'c_2$, $c_2b'=c_2b$, $bc_2=b'c_2$, $d_2\overline{d}_1+d_1\overline{d}_1=\overline{c_1}'c_3+\overline{c_2}'c_4+ d_2'c_2+d_1'c_1$, $d_1\overline{c_4}+d_2\overline{c_4}=\overline{c_1}d_2+\overline{c_2}d_1$,
$bc_3=\overline{b}'c_3$, $bd_2=\overline{b}d_2$, $d_2\overline{d}_2+d_1\overline{d}_2=\overline{c_1}'c_4+\overline{c_2}'c_3+ d_1'c_2+d_2'c_1$, $d_1\overline{c_3}+d_2\overline{c_3}= \overline{c_1}d_1+\overline{c_2}d_2$, $bc_4= \overline{b}'c_4$, $bd_1=\overline{b}d_1$, $c_3\overline{c_1}'+c_4\overline{c_2}'+ c_2d_2'+c_1d_2'=\overline{d}_1d_2+\overline{d}_1d_1$, $c_4\overline{b}'=c_3b$, $d_1\overline{c_1}+d_2\overline{c_2}=\overline{c_3}d_1+\overline{c_3}d_2$, $d_1\overline{b}=d_1b$, $d_2\overline{c_1}+d_1\overline{c_2}=\overline{c_4}d_1+\overline{c_4}d_2$, $d_2\overline{b}=d_2b$.

 \noindent {\bf Case 5:}  $c_3c_2=c_4c_2$, $c_3c_3=c_2c_2$, $c_3d_2=c_2d_1$, $bc_3=bc_4$, $c_2c_1+c_4c_2=c_1c_2+c_2c_3$, $c_2c_4+c_4c_3=c_1c_3+c_2c_1$, $c_2d_1+c_4d_1=c_1d_1+c_2d_2$, $c_3c_1+c_1c_2=c_3c_2+c_3c_3$, $c_3c_4+c_1c_4=c_3c_4+c_3c_1$, $c_1d_1=c_4d_2$, $c_4c_3=c_2c_2$, $c_2c_3=c_4c_2$, $c_2d_2=c_4d_1$, $d_2c_2=d_1c_3$, $d_2c_3=d_1c_2$, $d_2d_2=d_1d_1$, $d_2c_1+d_1c_2=d_2c_2+d_2c_3$, $d_1c_4+d_2c_4=d_2c_3+d_2c_1$.

\begin{remark}
We list here the nontrivial relations when the two crossings are all positive. The above relations then should be completed by $\overline{\ {\ }}$ and $\widehat{\ }$ operations. Please refer to the discussion in case 1. The collection of all nontrivial relations will be denoted by $R$.

If the variables satisfy the relations in $R$, then $f_{pq}=f_{qp}$.
\end{remark}

\section{Proof of the main theorem}
To define the invariant on any oriented link diagram $D$, we shall first assume/add some additional data.

   (1) Suppose each link component has an {\bf orientation}. This is already given.

   (2) {\bf Order} the link components by integers: 1,2, $\cdots$, m.

   (3) On each component $k_i$, pick a {\bf base point} $p_i$.

An oriented link diagram with order of link components and base points is called a {\bf marked diagram}. Now, we travel through component $k_1$ from $p_1$ along its orientation. When we finish $k_1$, we shall pass to $k_2$ starting from $p_2$, $\cdots$.

\begin{definition} A crossing point is called {\bf bad} if it is first passed over, otherwise, it is called {\bf good}. A link diagram contains only good crossings is called a monotone or ascending diagram.
\end{definition}

Given a monotone diagram, each link component $k_i$ can be regarded as a map $k_i: S^1 \to R^3= R^2 \times R$, and the $S^1$ can be divided into two arcs $\alpha_i \cup \beta_i$, such that, (1) the map $\beta_i \to R^2 \times R \to R^2$ is an immersion, i.e., its image is the monotone diagram. (2) different points in $\beta_i$ has different $z$ coordinates (the third coordinate in $R^2 \times R =R^3$), hence $\beta_i \to R^2 \times R \to R$ is monotonously increasing. (3) the image of $\alpha_i$ is vertical, i.e. its projection on $R^2$ is one single point, i.e., a base point. (4) any point in $k_i$ has smaller $z$ coordinate than the points in $k_{i+1}$. The set of maps $\{k_i\}$ is called a {\bf geometric realization} of a monotone diagram.

\begin{lemma} A monotone diagram corresponds to a trivial link.
\end{lemma}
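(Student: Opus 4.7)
The plan is to exhibit, for each component of a monotone diagram, an explicit embedded spanning disk living in its own horizontal slab, and then to use property (4) of the geometric realization to conclude that the resulting link is a split union of unknots.

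First I would fix a geometric realization $\{k_i\}$ as in the definition preceding the lemma. Property (4) says the vertical ranges of consecutive components are disjoint, so I can pick separating heights $h_1 < \cdots < h_{m-1}$ with $k_i$ entirely contained in the slab $\R^2 \times [h_{i-1}, h_i]$ (where $h_0 = -\infty$ and $h_m = +\infty$). Since a split union of unknots is by definition a trivial link, it suffices to produce, for each $i$, an embedded disk $D_i$ in the slab $\R^2 \times [h_{i-1}, h_i]$ whose boundary is $k_i$.

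Next I would build $D_i$ by sweeping horizontal segments as the height varies. By property (2), the $z$-coordinate restricted to $\beta_i$ is a strictly monotone homeomorphism onto some interval $[a_i, b_i]$, so at each height $z \in [a_i, b_i]$ there is exactly one point $\beta_i(z)$ of $\beta_i$ at height $z$; by property (3), $\alpha_i$ contributes the single point $(p_i, z)$ at the same height. Joining these two points by the horizontal line segment $s_i(z) \subset \R^2 \times \{z\}$ and setting $D_i = \bigcup_{z \in [a_i,b_i]} s_i(z)$ gives a surface parametrized as the image of $[a_i, b_i] \times [0,1]$ under $(z,t) \mapsto (1-t)\pi(\beta_i(z)) + t\, p_i$ at height $z$. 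Because segments at different heights are automatically disjoint, this map is an embedding and $D_i$ is an embedded disk with $\partial D_i = \alpha_i \cup \beta_i = k_i$, lying in the correct slab. The $D_i$ are then pairwise disjoint since their slabs are, giving the trivial link.

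The only delicate step is verifying that the swept-out surface really is an \emph{embedded} disk rather than a singular or pinched one; the strict monotonicity of the $z$-coordinate on $\beta_i$ supplied by property (2) is precisely the ingredient that rules out self-intersections among the segments $s_i(z)$, so this is not a serious obstacle. Note that the crossing data of the diagram enters only when the realization $\{k_i\}$ is constructed in the first place — once we have the realization satisfying (1)--(4), the disk construction depends only on its coordinate structure, not on how crossings were resolved.
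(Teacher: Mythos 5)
The paper does not actually supply a proof of this lemma --- it explicitly says ``The proof is easy. We leave it as an exercise'' --- so there is nothing to match your argument against; but your disk-sweeping construction is exactly the argument that the paper's definition of a geometric realization is set up to enable, and it is essentially correct. Each $k_i$ bounds the cone (taken levelwise) from $\beta_i$ to the vertical axis over the base point, these cones live in pairwise disjoint slabs by property (4), and disjoint embedded spanning disks is precisely the definition of a trivial link.

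Two small points of hygiene in your ``delicate step.'' First, the strict monotonicity of $z$ on $\beta_i$ is not really what rules out self-intersections among the segments $s_i(z)$: those are automatically disjoint because they lie in distinct horizontal planes. What monotonicity buys you is that there is a \emph{unique} point of $\beta_i$ at each height, so the slice $s_i(z)$ is well defined. The genuine embeddedness issue is whether $s_i(z)$ can degenerate at an interior height, i.e.\ whether $\pi(\beta_i(z)) = p_i$ for some $z \in (a_i, b_i)$; that would mean the projection of $k_i$ has a double point at the base point, which is excluded because base points are chosen away from crossings (and even if it occurred, one could replace the straight segment by a planar arc avoiding the coincidence). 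Second, at $z = a_i$ and $z = b_i$ the segments do collapse to points, so your parametrizing square $[a_i,b_i]\times[0,1]$ has its top and bottom edges identified to points; the quotient is still a disk with boundary $\alpha_i \cup \beta_i$, but this should be said rather than absorbed into the word ``embedding.'' With those remarks added, the proof is complete.
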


\noindent We do not use this lemma explicitly in this paper. It will help the readers to understand why we define the value for monotone diagram to be $v_n$. The proof is easy. We leave it as an exercise.

\bigskip Now we are going to construct the link invariant for oriented link diagrams. For a given marked link diagram, we can define an ordered pair $(c,d)$ of integers, called its index. Here $c$ is the crossing number of the diagram, and $d$ is the number of bad points of the diagram. $(c,d)<(c',d')$ if $c<c'$, or $c=c'$ and $d<d'$. Let $S(c,d)$ denote the set of all marked link diagrams with indices $\leq (c,d)$. Note that $S(c,0)$ contains exactly the monotone diagrams with $c$ crossing points.

Now let's study the skein relations. Take $f(E_{+})+bf(E_{-})+c_1f(E)+c_2f(W)+c_3f(HC)+c_4f(HT)+d_1f(VC)+d_2f(VT)=0$ for example, each term has a link diagram corresponding to it. If the diagram $E_+$ is marked, as mentioned at beginning of section 2, all the other diagrams are canonical oriented. What's more, $E_{-}$ is canonically marked using the same order, base points as $E_+$. Suppose the marked link diagram $E_{+}$ has index $(c,d)$, then $E_{-}$ has index $(c,d+1)$ or $(c,d-1)$, and all other diagrams has crossing number $c-1$. As we will show later, the invariant actually does not depend on the order and base points of the link diagram. This tells us that we can construct the invariant and prove its properties use induction on the index pair $(c,d)$. For example, suppose that $E_{-}$ has smaller index $(c,d-1)$, and the invariant is already defined for any diagram with index $\leq (c,d)$, then $f(E_+)$ is uniquely determined by the skein relation. We shall use this as the definition of $f(E_+)$.

For any integer $n>0$, we introduce a variable $v_n$, and suppose that $(1+b+d_1+d_2)v_n+(c_1+c_2+c_3+c_4)v_{n+1}=0$ is hold for all $n$.

\begin{prop} If $d_1'=d_2'$, then there is a function $f$ defined for marked link diagrams, satisfies the following properties.

\noindent (1) The value for any marked link diagram is uniquely defined. For any trivial link diagram $D\in S(0,0)$ with $n$ components, $f(D)=v_n$.

\noindent (2) Resolving at any crossing point, the invariant satisfies the skein relations.

\noindent (3) It is invariant under base point changes.

\noindent (4) $f(D)$ is invariant under Reidemeister moves that never involve more than $c$ crossings.

\noindent (5) It is invariant under changing order of components.
\end{prop}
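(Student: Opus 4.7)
The plan is to proceed by induction on the lexicographic index $(c,d)$, simultaneously defining $f$ on $S(c,d)$ and verifying properties (1)--(5) at each stage. The base case is $S(0,0)$: such a diagram is a disjoint union of $n$ planar simple closed curves, and we set $f(D)=v_n$. All five properties hold trivially here since no crossings or nontrivial Reidemeister moves can occur within $S(0,0)$.

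For the inductive step at $(c,d)$ with $d>0$, I would pick the first bad crossing $p$ encountered when traversing the components in their prescribed order from their base points, and use the skein relation at $p$ to \emph{define} $f(D)$. Because $p$ is bad, this expresses $f(D)$ as a linear combination of $f$-values on diagrams of strictly smaller index: flipping the sign at $p$ yields a diagram of index $(c,d-1)$ (that crossing is now good), while the six smoothings each have $c-1$ crossings. The definition's independence of the particular $p$ chosen is precisely the content of $f_{pq}=f_{qp}$, established in Section~\ref{sec:2.2} using the relations $R$ together with the hypothesis $d_1'=d_2'$ and the commutativity of all coefficients. The remaining case $(c,0)$ with $c>0$ is not handled by this skein resolution because all crossings are good; here I would declare $f(D)=v_n$ for an $n$-component monotone diagram (justified by the trivial-link lemma) and verify consistency with the skein relation applied at any good crossing, which is forced to collapse back to $v_n$ by the imposed relation $(1+b+d_1+d_2)v_n+(c_1+c_2+c_3+c_4)v_{n+1}=0$.

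For properties (2)--(5): (2) holds at the chosen $p$ by construction and at every other crossing by $f_{pq}=f_{qp}$. For (3), moving a base point across a crossing flips that crossing's good/bad status; one resolves that crossing via the skein relation and applies the inductive hypothesis to see both values reduce to identical expressions on lower-index diagrams. For (4), Reidemeister moves on diagrams of $\leq c$ crossings reduce by induction to verifying each move type on small diagrams: R2 and R3 (in all sign variants) are absorbed into the relations $R$, while R1 on a monotone loop reduces exactly to the imposed relation on the $v_n$'s. Property (5) follows from (3) together with the observation that permuting the component order only permutes the traversal order of crossings, which is covered by the commutativity $f_{pq}=f_{qp}$.

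The principal obstacle is the step from $(c-1,\ast)$ to $(c,0)$: one must check that assigning $f(D)=v_n$ to every monotone $n$-component diagram is compatible with both the skein relation at each good crossing and with R1 moves that create or destroy loops on a monotone strand. The delicate input is that an R1 loop in a monotone diagram, when fully smoothed through all eight terms of the same-component skein relation, produces precisely the combination $(1+b+d_1+d_2)v_n+(c_1+c_2+c_3+c_4)v_{n+1}$ (distinguishing loops that add a component from those that do not), so the extra relation imposed in $X$ is exactly what is needed. Once this compatibility is verified, the remaining checks are routine inductive case analyses, with the commutativity $f_{pq}=f_{qp}$ of Section~\ref{sec:2.2} carrying the essential algebraic weight.
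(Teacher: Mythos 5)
Your overall skeleton---induction on the index $(c,d)$, defining $f(D)$ by resolving at the first bad point, well-definedness from $f_{pq}=f_{qp}$, and the relation $(1+b+d_1+d_2)v_n+(c_1+c_2+c_3+c_4)v_{n+1}=0$ closing the loop on monotone diagrams---is exactly the paper's strategy. But two of your steps, as stated, would fail.

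First, Reidemeister II invariance is not ``absorbed into the relations $R$.'' The relations $R$ encode only resolution-order independence; they compare two ways of expanding the \emph{same} diagram and say nothing about equating a $c$-crossing diagram with a $(c-2)$-crossing one. The genuinely hard R2 case is a bigon whose two crossings are both bad and cannot both be made good by any base-point change (which forces the two strands onto different components). There one applies the second skein relation at the positive crossing of $X_i$ and of $X_i'$ and observes that the smoothings match up pairwise \emph{except} that the $S$ and $N$ smoothings are interchanged: $f(S)=f(N')$ and $f(N)=f(S')$. This is precisely where the hypothesis $d_1'=d_2'$ is used. You have instead attributed $d_1'=d_2'$ to the proof of $f_{pq}=f_{qp}$, where it plays no role ($R$ alone suffices there); as written you have no argument for R2 and no explanation of why the proposition carries that hypothesis at all.

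Second, your base-point-change argument is incomplete. In the case where $p$ is good for $B$ but bad for $B'$ and all other crossings are good, $f_B(D)=v_n$ by fiat while $f_{B'}(D)$ is \emph{defined} by the skein relation at $p$; to see that these agree you must know the values of all six smoothings of $D$ at $p$. You verify this only for ``an R1 loop,'' but $p$ is merely the first self-crossing of an arbitrary monotone diagram, and it is a genuine lemma (Lemma 3.5(2)(3) in the paper, proved via the geometric realization by height) that every smoothing of a monotone diagram at its first crossing is again an unlink diagram, with $n$ or $n+1$ components according to whether the smoothing is vertical or horizontal; only then does the imposed relation on the $v_n$'s force $f_{B'}(D)=v_n$. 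A companion reduction lemma (Lemma 3.5(1)) is likewise needed for statement (5): reordering components changes which inter-component crossings are bad, so order invariance is not a permutation of resolution order covered by $f_{pq}=f_{qp}$, but is obtained by simultaneously reducing both marked diagrams to $S(0,0)$ by crossing changes and Reidemeister moves that never increase the crossing number.
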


\begin{proof}

The construction and proofs are all using induction on the index pair $(c,d)$, where $c$ is the crossing number of the diagram, and $d$ is the number of bad points of the diagram. It is obvious that $0\leq d\leq c$.

$\\$
\noindent {\bf The initial Step.} For a diagram of index $(0,0)$, namely a monotone diagram with no crossing points, define its value to be $v_n$, where $n$ is the number of components of the link.

\noindent Then the statements (1)-(5) are satisfied for diagrams inside $S(0,0)$. There is nothing to prove in this case.

$\\$
\noindent {\bf The inductive Step.}
Now suppose the statements (1)-(5) are proved for link diagrams with crossings strictly less than $c$. This means that for any marked oriented link diagram with crossings $<c$, the value of the invariant is uniquely defined, independent of choice of base points and ordering of link components. Hence we can choose base points and ordering of link components arbitrarily to define the invariant.

$\\$
\noindent {\bf Proof of the statement (1)}:

If the diagram $D$ has index $(c,0)$, then it is a monotone diagram. We define $f(D)$ to be $v_n$, where $n$ means that the link has $n$ components.

Suppose that $f(D)$ is defined for diagrams of index $\leq (c,d)$, where $d\geq 0$. If the diagram $D$ has index $(c,d+1)$, then it has bad points. We resolve the diagram at its first bad point. Then, in the corresponding skein equation, all the other diagrams are of smaller indices than $(c,d)$. Hence $f$ is defined for those diagrams. So $f(D)$ is uniquely determined by the skein relation. We take this as the definition of invariant for $D$. We shall prove later that if we resolve at other crossing point we shall get the same result.

\begin{remark}
We can similarly define the invariant for marked diagrams on $S^2$. Given a marked link diagram $D$ on $R^2$, we can also regard it as a marked diagram on $S^2$. However, for a marked link diagram $D$ on $S^2$, we can have many marked diagrams on $R^2$, depending on where we pick the $\infty$ point. All those marked diagrams on $R^2$ have the same value of invariant using the definition above. As a consequence, when we later prove the Reidemeister moves invariance, we can actually allow more "generalized Reidemeister moves". For example, if an outermost monogon contains the $\infty$ point, we can use the Reidemeister move I to reduce it.
\end{remark}

\noindent {\bf Proof of the statement (2)}.

For a link diagram $D$, if $D$ has one bad point, since $f(D)$ is defined by the skein relation, it satisfies the statement (1).
If $D$ has at least 2 bad points, and one resolve at a bad point $q$. If $q$ is the first bad point, then by definition, the equation is satisfied.
If not, denote the first bad point by $p$. If we resolve at $p$, we get many diagrams $D_1,D_2,\cdots $ and a linear sum $f_p(D)=\sum \alpha_i f(D_i)$ for some $\alpha_i$. Then by definition $f(D)=f_p(D)$.

We resolve each $D_i$ at $q$, then we get the linear sum $f_{q}(D_i)$. Each diagram $D_i$ has strictly lower indices than $(c,b)$. If $D_i$ has crossing number $c-1$, then skein equation is proved for resolving at any point. If $D_i$ has crossing number $c$, then it has $b-1$ bad points, and $q$ is also a bad point of $D_i$. In all cases, by induction hypothesis, $f(D_i)=f_{q}(D_i)$. Hence $f(D)=f_p(D)=\sum \alpha_i f_{q}(D_i)$.

On the other hand, if we resolve $D$ at $q$ first, we get many diagrams $D_1',D_2',\cdots $, each has strictly lower indices than $(c,b)$. Hence the statements (0)-(4) are satisfied. We get a linear sum $f_q(D)=\sum \beta_i f(D_i')$. We resolve each $D_i'$ at $p$, then we get the linear sum  $f_p(D_i')$. By the argument before and our induction hypothesis, $f(D_i')=f_p(D_i')$. Hence $f_q(D)=\sum \beta_i f_p(D_i')$. On the other hand, the ring is designed such that $\sum \beta_i f_p(D_i')=\sum \alpha_i f_{q}(D_i)$! (This is the equation $f_{pq}=f_{qp}$.)
$$\xymatrix{f(D) \ar@{=} [d]^{definition}_{1st\ bad\ point} &    \\ f_p(D)=\sum \alpha_i f(D_i) \ar@{=} [d]_{induction}^{hypothesis} & f_q(D)=\sum \beta_i f(D_i') \ar@{=} [d]_{induction}^{hypothesis} \\ \sum \alpha_i f_{q}(D_i) \ar@{=} [r]^{f_{pq}=f_{qp}} & \sum \beta_i f_p(D_i') }$$

Therefor, $f(D)=f_p(D)=\sum \alpha_i f_{q}(D_i)=\sum \beta_i f_p(D_i')=f_q(D)$. That is, if we resolve at $q$, the skein equation is satisfied.

\begin{cor} If one resolve any point (not necessarily bad), the skein equation is satisfied.
\end{cor}

\begin{proof} If $q$ is a good point of $D$, we make a crossing change at $q$ get a new diagram $D'$, then $q$ is bad point of $D'$. The above proves that if we resolve $D'$ at $q$ the skein equation is satisfied. But this the same equation as $D$ resolving at $q$.

\end{proof}

\noindent This means that one can resolve at any crossing point to calculate the invariant, not necessarily the first bad point.

\bigskip
To prove  statement (3), we need the following lemmas.

\begin{lem}\label{lem:4} (\cite{LR} Lemma 15.1) Suppose that $p$ and $q$ are two arcs in $R^2$ meeting only at their end points $A$ and $B$, and let $R$ be the compact region bounded by $p\cup q$. Suppose that $t_1, t_2, \cdots , t_n$ are arcs in $R$, each meeting $p\cup q$ at just its end points, one in $p$ and one in $q$. Suppose that every $t_i \cap t_j$ is at most one point, that intersections of arcs are transverse and there are no triple points. The graph, with vertices all intersections of these arcs and edges comprising $p\cup q \cup (\cup_i t_i )$, separates $R$ into collection of $v$-gons. Then amongst these  $v$-gons there is a 3-gon with an edge in $p$ and there is a 3-gon with an edge in $q$.
\end{lem}

Using the above lemma, and a modification of {\cite{M}} Lemma 5.1, we can prove the following lemma for marked link diagrams.

\begin{lem}\label{lem:5} (1) Each marked monotone link diagram $D$ on $S^2$ with $< c$ crossings can be transformed to the unlink diagram in $S(0,0)$ using Reidemeister moves, and at each step, the resulting diagram has crossing number $<c$.

\noindent (2) Let $D$ be a marked monotone knot diagram $D$ with $c$ crossings. $b$ is the base point. Starting from $b$, $p$ is the first crossing point. Then after resolve $D$ at $p$, any diagram $D_i$ with $c-1$ crossings is a diagram of unknot or unlink.

\noindent (3) Let $D$ be a marked monotone link diagram $D$ with $c$ crossings. $b$ is one base point. Starting from $b$, $p$ is the first crossing point. If the two arcs passing through $p$ are from same link component, then after resolve $D$ at $p$, for any the diagrams $D_i$ with $c-1$ crossings is a diagram of unknot or unlink.
\end{lem}

\begin{proof}
(1) The proof is a modification of the proof in {\cite{M}}. In a link diagram $D$, a loop is a part of a component that starts and ends at the same crossing. A innermost loop is called simple if it has no selfintersections. Two arcs bound a bigon if they have no selfintersections, have common initial and final points and no other intersections. A bigon is called simple if it does not contain smaller bigons and loops inside.

By an easy innermost argument, one knows that if $D$ has crossing then $D$ has a simple bigon or a simple loop.

Case 1. If there is a simple bigon, suppose the two arcs $p,q$ forming the bigon are from a same link component $l$.  Since the bigon is simple, it satisfies the condition of lemma~\ref{lem:4}. Then there is a 3-gon with an edge in $p$ and there is a 3-gon with an edge in $q$. Then one of them dose not contain the base point of $l$. Since this is a monotone diagram, one can move the 3-gon outside of the bigon by a Reidemeister 3 move. Thus the bigon is simplified. When there are no arcs in the bigon, one can remove the bigon by a Reidemeister 2 move.

Case 2, if there is a simple bigon, and the two arcs $p,q$ forming the bigon are from link component $l,m$. If at most one base point $A$ or $B$ of $l,m$ lies in the bigon, then we can deal with it as in case 1. If both $A,B$ lie in the bigon, say $A$ is on arc $a$, $B$ is on arc $b$. $a$ divides the bigon into two parts, one part, say $X$, does not contain $B$. The boundary of $X$ contain three parts, $p'\subset p,q' \subset q, a$. We regard $p'\cup a=p'{}'$ as one arc. The it forms a bigon with $q'$. Applying lemma~\ref{lem:4}, one can use Reidemeister 3 moves to remove arcs inside this bigon since $B$ is outside of it. When there is no arc pass this new bigon, one can use Reidemeister 3 moves to move $q$ to remove this bigon. Now the bigon contains at most one base point $B$. We can simplify it as above.

For a loop, we can regard it as a degenerate bigon and treat it similarly.

(2) We can assume there is a small open disk $U$ containing $b$. $U$ contains only one crossing, $p$. There are two type of smoothings, horizontal ($HC,HT,E,W$) and vertical ($S,N,VC,VT$). See Fig. \ref{fig4}. When one starts at $B$ and travel along $D$, one passes $p,A_1,A_2,B_1,B_2$.

For a horizontal smoothing, $D$ has only one component and $D_i$ has has two components $D_i^1, D_i^2$. If we take $b_1,b_2$ as base points, then $D'$ is a monotone diagram, hence is a diagram of unlink.

For a vertical smoothing, $D,D'{}'$ both have only one component. Since the arc $A_1A_2$ has smaller $z$ coordinate than the arc $B_2B_1$. The two arcs are also monotone with respect to $z$ coordinate, hence they can contract to the boundary of the disk $U$ without obstruction. Hence $D'{}'$  is a diagram of unknot.

(3) Follows easily from (2).
\end{proof}

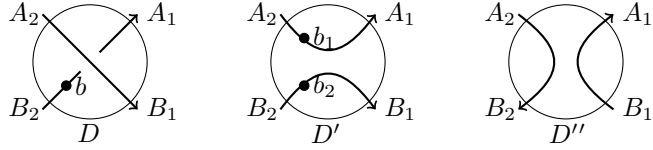
\begin{figure}[ht]
\beginpgfgraphicnamed{graphic-of-flat-world}
\begin{tikzpicture}[scale=.9]
\draw [thick] [->]  (0pt,40pt) -- (40pt,0pt); \draw [thick] (0pt,0pt) -- (16pt,16pt);
\draw [thick]  [->] (24pt,24pt) -- (40pt,40pt) ;
\draw [thick] [thick] (15pt,-10pt) node[text width=0.4pt] {$D$};
\draw [thick] [thick] (44pt,40pt) node[text width=0.4pt] {$A_1$};
\draw [thick] [thick] (-14pt,40pt) node[text width=0.4pt] {$A_2$};
\draw [thick] [thick] (44pt,0pt) node[text width=0.4pt] {$B_1$};
\draw [thick] [thick] (-14pt,0pt) node[text width=0.4pt] {$B_2$};
\draw [thick] [thick] (14pt,10pt) node[text width=0.4pt] {$b$};
\filldraw [black] (10pt,10pt) circle (2pt);
\draw  (20pt,20pt) ellipse (24pt and 24pt);

\draw [thick] [->] (100pt,0pt) .. controls (115pt,20pt) and (125pt,20pt) .. (140pt,0pt);
\draw [thick] (113pt,-10pt) node[text width=0.4pt] {$D'$};
\draw [thick] [->] (100pt,40pt) .. controls (115pt,20pt) and (125pt,20pt) .. (140pt,40pt);
\draw  (120pt,20pt) ellipse (24pt and 24pt);
\draw [thick] [thick] (144pt,40pt) node[text width=0.4pt] {$A_1$};
\draw [thick] [thick] (86pt,40pt) node[text width=0.4pt] {$A_2$};
\draw [thick] [thick] (144pt,0pt) node[text width=0.4pt] {$B_1$};
\draw [thick] [thick] (86pt,0pt) node[text width=0.4pt] {$B_2$};
\filldraw [black] (110pt,10pt) circle (2pt);
\draw [thick] [thick] (114pt,10pt) node[text width=0.4pt] {$b_2$};
\filldraw [black] (110pt,30pt) circle (2pt);
\draw [thick] [thick] (114pt,30pt) node[text width=0.4pt] {$b_1$};

\draw [thick]  [->] (200pt,40pt) .. controls (220pt,25pt) and (220pt,15pt) .. (200pt,0pt);
\draw [thick] (213pt,-10pt) node[text width=0.4pt] {$D'{}'$};
\draw [thick]  [<-] (240pt,40pt) .. controls (220pt,25pt) and (220pt,15pt) .. (240pt,0pt);
\draw  (220pt,20pt) ellipse (24pt and 24pt);
\draw [thick] [thick] (244pt,40pt) node[text width=0.4pt] {$A_1$};
\draw [thick] [thick] (186pt,40pt) node[text width=0.4pt] {$A_2$};
\draw [thick] [thick] (244pt,0pt) node[text width=0.4pt] {$B_1$};
\draw [thick] [thick] (186pt,0pt) node[text width=0.4pt] {$B_2$};
\end{tikzpicture}
\endpgfgraphicnamed
\caption{Resolve monotone diagram near base point.}
\label{fig4}
\end{figure}

\noindent {\bf Proof of the statement (3)}:

Given a diagram $D$ with a fixed orientation and order of components, suppose that there are two base point sets $B$ and $B'$. We only need to deal with the case that $B$ and $B'$ has only one point $x$ and $x'$ different, they are in the same component $k$, and between $x$ and $x'$ there is only one crossing point $p$. Using the base point sets $B$ or $B'$, $D$ has the same bad points except $p$. Let $f_B(D)$ and $f_{B'}(D)$ denote $f(D)$ using base point sets $B$ and $B'$ respectively.

We shall prove the equation $f_B(D)=f_{B'}(D)$. If there is bad point other than $p$, say $q$, we resolve $D$ at $q$ to get diagrams $D_1,D_2,\cdots $. Then those $D_i$'s has lower indices than $D$, hence base point invariance is proved for them. As before, we get a marked diagram $\overline{D}$ corresponding to crossing change at $q$. When we apply skein equation to the bad point, for $f_B(D)$ and $f_{B'}(D)$, each $f(D_i)$ has same value, hence  $f_B(D)=f_{B'}(D)$ if and only if  $f_B(\overline{D})=f_{B'}(\overline{D})$. Hence we can assume there are no other bad points.

Now there are three cases. Case 1. $p$ is a good point for both the two base point systems, then the values for $D$ are both $v_n$.
Case 2. $p$ is a bad point for both the two base point systems, then the skein equation tells the values are the same.

Case 3, $p$ is good in $B$, bad in $B'$. Then the two arcs passing through $p$ are from same link component, and the diagram $D$ with base point set $B$ is a monotone diagram. Applying the above lemma~\ref{lem:5}(3) to $D$, all $D_i$ are monotone diagrams. Suppose $D$ has $n$ components. Then $f_{B'}(D)$ is defined by the skein equation while all other terms $f_{B}(D)$ and $f(D_i)$ are known and have value in $\{ v_n,v_{n+1}\} $ .

On the other hand, $(1+b+d_1+d_2)v_n+(c_1+c_2+c_3+c_4)v_{n+1}=0$ is hold for all $n$. Since the two arcs passing through $p$ are from same link component, the $VC,VT$ diagrams all have $n$ components, the $HC,HT,W,E$ diagrams all have $n+1$ components. The values $f_{B}(D)$ and $f(D_i)$ fit the equations $(1+b+d_1+d_2)v_n+(c_1+c_2+c_3+c_4)v_{n+1}=0$.  Hence the solution of the skein equation is $f_{B'}=v_n$.

\bigskip
\noindent {\bf Proof of the statement (4)}:

\begin{lem} $f$ is invariant under Reidemeister III move.
\end{lem}
\begin{proof} Given two diagrams $D$ and $D'$, which differ by a Reidemeister move III.  Like above, we can assume all other points are good. In the two local disks containing the Reidemeister move III, there is a one to one correspondence between the three arcs as follows. We can order the three arcs by 1,2,3, ($1',2',3'$ in $D'$) such that arc 1 ($1'$) is above arc 2 ($2'$), and arc 2 ($2'$) is above arc 3 ($3'$). The one to one correspondence preserves the ordering. Their intersections induce a one to one correspondence between the three pairs of points in the two disks.  Call them $p,p'$, $q,q',r,r'$. If arc $i$ intersects arc $j$ at $x$, then arc $i'$ intersects arc $j'$ at $x'$.

Suppose $p$ is the intersection of arc 1 and arc 2 (or arc 2 and arc 3), then we resolve both $p$ and $p'$, and get many new link diagrams. There is a canonical one to one correspondence between those diagrams. So we can denote them by $D_1,D_2, \cdots , D_1',D_2', \cdots $. Here $D_1,D_1'$ correspond to crossing change for $D$ and $D'$, and all other diagrams are of smaller crossing numbers. By induction hypothesis, for those diagrams, we have $f(D_i)=f(D_i')$, $i\geq 2$. Therefor, by the skein equation, $f(D)=f(D')$ if and only if $f(D_1)=f(D_1')$. So we can assume $p$ is a good point. Similarly, we can assume the intersection of arc 2 and arc 3 is a good point.

Now, the intersection of arc 1 and arc 3, say $r$, is also a good point. The reason is simple. Since we proved base point invariance, we can assume there is no base point on any of the 3 arcs. The intersection of arc 2 and arc 3 is good means we first travel arc 3, then arc 2. Likewise, intersection of arc 1 and arc 2 is good means we first travel arc 2, then arc 1. Hence we first travel arc 3, then arc 1, the intersection of arc 1 and arc 3 is good.

So all the three intersections $p,q,r$ are good. It follows that $p',q',r'$ are good. Now we have two monotone diagrams, the invariance is clear.
\end{proof}

\begin{lem} $f$ is invariant under Reidemeister I move.
\end{lem}
\begin{proof}

\noindent Given two diagrams $D$ and $D'$, which differ by a Reidemeister one move. Say $D$ has index $(c-1,d)$, where $D'$ has index $(c,d')$. $D'$ has one extra crossing point $p$. By base point invariance, we can choose base point such that $p$ is a good point.

As before, we can assume that if there are bad points other than $p$, we can resolve them and prove Reidemeister move one invariance inductively.

Now, all other points are good, then $D$ and $D'$ are both monotone diagrams of trivial links. So $f(D)=f(D')$.

\end{proof}

\begin{lem} $f$ is invariant under Reidemeister II move.
\end{lem}
\begin{proof}

\noindent Given two diagrams $D$ and $D'$, which differs at a Reidemeister move II. $D'$ has two more crossings, $p$ and $q$. Likewise, we can assume all other points are good. If the two crossings, $p$ and $q$, one is good, the other is bad, one can use a base point change to make them both good. Then both the diagrams $D$ and $D'$ are monotone diagrams. There is nothing to prove.

The only case needs a proof is that both the two crossing are bad, and base point changes wouldn't change them from bad to good. However, changing both the two crossing will make them both good points. And in this case, the two arcs are from different link components.

In the following Fig. ~\ref{fig5}, we list all 3 possible cases of the intersections as $X_i,X_i'$, $i=1,2,3$. One has two bad points, the other has two good points. As before, we can assume all other points are good points. In each case, either $X_i$ or $X_i'$ is a monotone diagram. We apply the skein equation to the positive crossing of $X_i$ and $X_i'$ respectively. Then we have

$$f(X_i)+bf(Y_i)+c_1'f(E)+c_2'f(W)+d_1'f(S)+d_2'f(N)=0$$ and $$f(X_i')+bf(Y_i)+c_1'f(E')+c_2'f(W')+d_1'f(S')+d_2'f(N')=0.$$

\begin{figure}[ht]
\begin{tikzpicture}[scale=.25]
\draw[thick] [<-] (0,8) .. controls (4,4) and (4,4) .. (0,0); \draw (2,-1) node[text width=4pt] {$X_1$};
\draw[thick] [<-] (4,8) .. controls (2.5,6.7) and (2,6)  .. (2.5,6.5);
\draw[thick]  (1.5,5.5) .. controls (0.5,4) and (0.5,4)  .. (1.5,2.5);
\draw[thick]  (4,0) .. controls (3,0.9)   .. (2.3,1.6);
 \draw (4,2) node[text width=18pt] {$+$};
 \draw (4,6) node[text width=18pt] {$-$};

\draw[thick] [<-] (14,8) .. controls (10,4) and (10,4) .. (14,0); \draw (12,-1) node[text width=4pt] {$X_1'$};
\draw[thick] [<-] (10,8) .. controls (11.5,6.7) and (12,6)  .. (11.5,6.5);
\draw[thick]  (12.5,5.5) .. controls (13.5,4) and (13.5,4)  .. (12.5,2.5);
\draw[thick]  (10,0) .. controls (11,0.9)   .. (11.7,1.6);
\draw (14,2) node[text width=18pt] {$-$};
\draw (14,6) node[text width=18pt] {$+$};

\draw[thick] [<-] (20,8) .. controls (24,4) and (24,4) .. (22.5,2); \draw (22,-1) node[text width=4pt] {$Y_1$};
\draw[thick]  (20,0) .. controls (21,0.9)   .. (21.7,1.5);
\draw[thick] [<-] (24,8) .. controls (22.5,6.7) and (22,6)  .. (22.5,6.5);
\draw[thick]  (21.6,5.7) .. controls (20,3.6) and (20,4) .. (24,0);

\draw[thick] [shift={(0,-11)}] [<-] (0,8) .. controls (4,4) and (4,4) .. (0,0);
\draw  [shift={(0,-11)}] (2,-1) node[text width=4pt] {$X_2$};
\draw[thick] [shift={(0,-11)}]  (4,8) .. controls (2.5,6.7) and (2,6)  .. (2.5,6.5);
\draw[thick] [shift={(0,-11)}] (1.5,5.5) .. controls (0.5,4) and (0.5,4)  .. (1.5,2.5);
\draw[thick] [shift={(0,-11)}] [<-] (4,0) .. controls (3,0.9)   .. (2.3,1.6);
 \draw [shift={(0,-11)}] (4,2) node[text width=18pt] {$-$};
 \draw [shift={(0,-11)}] (4,6) node[text width=18pt] {$+$};

\draw[thick] [shift={(0,-11)}] [->] (14,8) .. controls (10,4) and (10,4) .. (14,0);
\draw  [shift={(0,-11)}] (12,-1) node[text width=4pt] {$X_2'$};
\draw[thick][shift={(0,-11)}] [<-] (10,8) .. controls (11.5,6.7) and (12,6)  .. (11.5,6.5);
\draw[thick] [shift={(0,-11)}] (12.5,5.5) .. controls (13.5,4) and (13.5,4)  .. (12.5,2.5);
\draw[thick] [shift={(0,-11)}] (10,0) .. controls (11,0.9)   .. (11.7,1.6);
\draw (14,2)[shift={(0,-11)}] node[text width=18pt] {$+$};
\draw (14,6) [shift={(0,-11)}]node[text width=18pt] {$-$};

\draw[thick] [shift={(0,-11)}]  (24,8) .. controls (20,4) and (20,4) .. (21.5,2.5);
\draw[thick] [shift={(0,-11)}] (22.8,5.5) .. controls (24,4) and (24,4) .. (20,0);
\draw[thick][shift={(0,-11)}] [<-] (20,8) .. controls (21.5,6.7) and (22,6)  .. (21.5,6.5);
\draw[thick] [shift={(0,-11)}] [<-] (24,0) .. controls (23,0.9)   .. (22.3,1.6);
\draw  [shift={(0,-11)}]  (22,-1) node[text width=4pt] {$Y_2$};

\draw[thick] [shift={(0,-22)}] [->] (0,8) .. controls (4,4) and (4,4) .. (0,0);
\draw  [shift={(0,-22)}] (2,-1) node[text width=4pt] {$X_3$};
\draw[thick] [shift={(0,-22)}] [<-] (4,8) .. controls (2.5,6.7) and (2,6)  .. (2.5,6.5);
\draw[thick] [shift={(0,-22)}] (1.5,5.5) .. controls (0.5,4) and (0.5,4)  .. (1.5,2.5);
\draw[thick] [shift={(0,-22)}]  (4,0) .. controls (3,0.9)   .. (2.3,1.6);
 \draw [shift={(0,-22)}] (4,2) node[text width=18pt] {$-$};
 \draw [shift={(0,-22)}] (4,6) node[text width=18pt] {$+$};

\draw[thick] [shift={(0,-22)}] [<-] (14,8) .. controls (10,4) and (10,4) .. (14,0);
\draw  [shift={(0,-22)}] (12,-1) node[text width=4pt] {$X_3'$};
\draw[thick][shift={(0,-22)}]  (10,8) .. controls (11.5,6.7) and (12,6)  .. (11.5,6.5);
\draw[thick] [shift={(0,-22)}] (12.5,5.5) .. controls (13.5,4) and (13.5,4)  .. (12.5,2.5);
\draw[thick] [shift={(0,-22)}] [<-] (10,0) .. controls (11,0.9)   .. (11.7,1.6);
\draw (14,2)[shift={(0,-22)}] node[text width=18pt] {$+$};
\draw (14,6) [shift={(0,-22)}]node[text width=18pt] {$-$};

\draw[thick] [shift={(0,-22)}] [<-]  (24,8) .. controls (20,4) and (20,4) .. (21.5,2.5);
\draw[thick] [shift={(0,-22)}] [->] (22.8,5.5) .. controls (24,4) and (24,4) .. (20,0);
\draw[thick][shift={(0,-22)}] (20,8) .. controls (21.5,6.7) and (22,6)  .. (21.5,6.5);
\draw[thick] [shift={(0,-22)}] (24,0) .. controls (23,0.9)   .. (22.3,1.6);
\draw [shift={(0,-22)}]  (22,-1) node[text width=4pt] {$Y_3$};

\end{tikzpicture}
\caption{Reidemeister move II invariance.}
\label{fig5}
\end{figure}
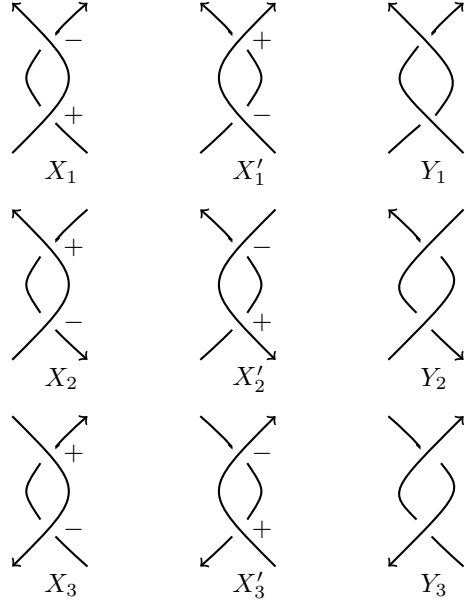

One can check that $f(E)=f(E'),f(W)=f(W'),f(S)=f(N'), f(N)=f(S')$. Since we assumed that $d_1'=d_2'$, we have $c_1'f(E)+c_2'f(W)+d_1'f(S)+d_2'f(N)=c_1'f(E')+c_2'f(W')+d_1'f(S')+d_2'f(N')$. Therefor, $f(X_i)=f(X_i')$ for $i=1,2,3$. Since either $X_i$ or $X_i'$ is a monotone diagram, invariance under Reidemeister II move is proved.

\end{proof}

\bigskip
\noindent {\bf Proof of the statement (5)}: $f$ is invariant under changing order of components.

\noindent Given two marked diagrams with different ordering of components. For simplicity, call them $D^1$ and $D^2$. By lemma ~\ref{lem:5}(1), they can be simultaneously reduced to trivial marked diagrams $\overline{D}^1$ and $\overline{D}^2\in S(0,0)$ by crossing changes and Reidemeister moves never increasing crossings. So $f(D^1)=f(D^2)$ if and only if $f(\overline{D}^1)=f(\overline{D}^2)$. However, $\overline{D}^1$ and $\overline{D}^2\in S(0,0)$ are trivial link diagrams with different ordering of link components.  By definition, $f(\overline{D}^1)=f(\overline{D}^2)=v_n$. Hence $f(D^1)=f(D^2)$.

\end{proof}

Now, let $X$ denote the quotient ring $Z[b,b',c_1,c_2,c_3,c_4,d_1,d_2,b',c_1'c_2',d_1',d_2',v_1,v_2,v_3,\cdots ]/R_1$, where $R_1=R\cup \{d_1'=d_2', (1+b+d_1+d_2)v_n+(c_1+c_2+c_3+c_4)v_{n+1}=0, $ for all $n=1,2,3, \cdots  \} $. Then we have the following theorem.

\begin{theorem}\label{sec:6}
For oriented link diagrams, there is a link invariant $f$ with values in $X$ and satisfies the following skein relations:

\noindent (1) If the two strands are from same link component, then

$f(E_{+})+bf(E_{-})+c_1f(E)+c_2f(W)+c_3f(HC)+c_4f(HT)+d_1f(VC)+d_2f(VT)=0.$

\noindent (2) Otherwise,
$f(E_{+})+b'f(E_{-})+c_1'f(E)+c_2'f(W)+d'_1f(S)+d'_2f(N)=0.$

The value for a trivial n-component link is $v_n$.

In general, replacing $X$ by any homomorphic image of $X$, one will get a link invariant.
\upshape
\end{theorem}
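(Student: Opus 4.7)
The plan is to derive Theorem \ref{sec:6} directly from the preceding Proposition, which already supplies all the substantive content. The Proposition produces a function $f$ on \emph{marked} oriented link diagrams valued in $X$, satisfying uniqueness (1), the skein relations at every crossing (by (2) and its Corollary), independence of base points (3), invariance under Reidemeister moves of bounded complexity (4), and independence of component ordering (5). What remains is (i) to promote $f$ to an invariant of unmarked oriented links, and (ii) to upgrade the bounded-crossing Reidemeister invariance of part (4) into a fully global Reidemeister invariance.

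First I would fix any oriented diagram $D$, choose an arbitrary ordering of its components and arbitrary base points to form a marked diagram $\widetilde{D}$, and set $f(D) := f(\widetilde{D})$. Parts (3) and (5) of the Proposition imply $f(\widetilde{D})$ is independent of these auxiliary choices, so $f(D)$ is well defined. Next, to upgrade Reidemeister invariance, consider two oriented diagrams $D_1,D_2$ of the same oriented link connected by a finite sequence of Reidemeister moves $D_1 = D^{(0)} \to D^{(1)} \to \cdots \to D^{(k)} = D_2$. Let $c$ be one more than the maximum crossing number occurring along the sequence. Mark $D_1$ arbitrarily and transport the ordering and base points through each move to obtain marked intermediate diagrams; by part (4) of the Proposition applied at level $c$, we get $f(D^{(i)}) = f(D^{(i+1)})$ for every $i$, and therefore $f(D_1) = f(D_2)$. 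Hence $f$ depends only on the oriented link type.

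The skein relations asserted in the theorem are then immediate from part (2) of the Proposition and its Corollary, which permit resolution at \emph{any} crossing. The value of $f$ on the trivial $n$-component link equals $v_n$: the standard $n$-circle diagram lies in $S(0,0)$ and is assigned $v_n$ by construction, and Reidemeister invariance just established transports this value to any other diagram of the unlink. For the homomorphic image assertion, given a ring homomorphism $\varphi \colon X \to X'$, the composite $\varphi \circ f$ satisfies the same skein relations with coefficients $\varphi(b), \varphi(b'), \varphi(c_i), \varphi(c_i'), \varphi(d_i), \varphi(d_i'), \varphi(v_n)$; since every defining relation of $R_1$ is preserved by $\varphi$, the composite is still a well-defined link invariant valued in $X'$.

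The only genuinely new issue in this packaging is the upgrade from bounded-crossing to global Reidemeister invariance, but it is not a real obstacle: any fixed pair of diagrams connected by a fixed sequence of Reidemeister moves passes through only finitely many crossings, so one can always choose a single bound $c$ for which part (4) of the Proposition applies to the entire sequence. All the hard work, namely the derivation of the commutation and defining relations in $R$ (Section 2) and the inductive construction of $f$ with base-point, ordering, and Reidemeister invariance (the Proposition), has already been carried out, so the present theorem is essentially a bookkeeping consequence.
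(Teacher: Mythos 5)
Your proposal is correct and follows essentially the same route as the paper: the paper states this theorem immediately after the Proposition with no separate argument, intending exactly the packaging you spell out (well-definedness from base-point and ordering invariance, global Reidemeister invariance by choosing the bound $c$ to exceed all crossings in a given sequence of moves, skein relations from the Corollary, and homomorphic images by composing with $\varphi$). Your explicit handling of the bounded-to-global upgrade for Reidemeister invariance is a detail the paper leaves implicit, and it is handled correctly.
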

\medskip

\subsection{Modifying by writhe}\label{sec:2.3}

There is another closely related link invariant with values in another commutative ring $Y$. The idea is that the skein relations can reduce the calculation to monotone diagrams, and we can regard the set of monotone diagrams as a basis and assign writhe dependant values to those diagrams. The is an analogue of the Kauffman two variable polynomial.

Now, let $A$ be a new variable. Let $Y$ denote the quotient ring $\\ Z[A,A^{-1},b,b',c_1,c_2,c_3,c_4,d_1,d_2,b',c_1'c_2',d_1',d_2',v_1,v_2,v_3,\cdots ]/R_2$, where $R_2=R\cup \{d_1'=d_2',AA^{-1}=1,  Av_n+A^{-1}bv_n+(c_1+c_2+c_3+c_4)v_{n+1}+(d_1+d_2)v_n=0, $ for all $n=1,2,3, \cdots  \} $. Then we have the following theorem.

\begin{theorem}\label{sec:6}
There is a link invariant $F$ with values in $Y$. For oriented link diagram $D$, $F(D)=f(D)A^{-w}$ where $w$ is a the writhe of the link diagram, and $f$ satisfies the following skein relations.

\noindent (1) If the two strands are from same link component, then

$f(E_{+})+bf(E_{-})+c_1f(E)+c_2f(W)+c_3f(HC)+c_4f(HT)+d_1f(VC)+d_2f(VT)=0.$

\noindent (2) Otherwise,
$f(E_{+})+b'f(E_{-})+c_1'f(E)+c_2'f(W)+d'_1f(S)+d'_2f(N)=0.$

The value of $F$ for a trivial n-component link is $v_n$.

In general, replacing $Y$ by any homomorphic image of $Y$, one will get a link invariant.
\upshape
\end{theorem}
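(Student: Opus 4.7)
The plan is to mirror the construction and proof of the previous theorem (Proposition 3.3 and Theorem 3.5), but to modify the base case of the induction so that monotone diagrams pick up a writhe factor. This parallels how the Kauffman two-variable polynomial is obtained from a regular-isotopy invariant by multiplying by a power of $A$.

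First I would set up the same induction on the index $(c,d)$ for marked diagrams as in the proof of Proposition 3.3. The only change in the base case is that $(c,0)$, i.e.\ a monotone diagram $D$ with $c$ crossings, $n$ components, and writhe $w$, is assigned $f(D)=A^{w}v_{n}$ rather than $v_{n}$. For $d\geq 1$ one still defines $f(D)$ by resolving at the first bad point via the skein relation, which expresses $f(D)$ as a linear combination of values on diagrams of strictly smaller index.

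Next I would re-prove analogs of parts (1)--(5) of Proposition 3.3 for this $f$, with one essential replacement: Reidemeister I invariance is weakened to the rule that inserting a positive (resp.\ negative) kink on an arc multiplies $f$ by $A$ (resp.\ $A^{-1}$). Properties (1), (2), and (5) go through verbatim, since for $d\geq 1$ the construction still uses only the relations in $R$, which are shared between $R_{1}$ and $R_{2}$. Reidemeister II and III invariance of $f$ also carry over almost verbatim; their proofs in Proposition 3.3 never invoked the relation $(1+b+d_{1}+d_{2})v_{n}+(c_{1}+c_{2}+c_{3}+c_{4})v_{n+1}=0$, relying only on base-point invariance, the skein relations, and Lemma 3.5.

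The main obstacle, and the only place where the new relation in $R_{2}$ is needed, is the base-point invariance argument of Case 3 of the proof of Proposition 3.3(3) and the corresponding Reidemeister I computation. Here $D$ is monotone under $B$, while under $B'$ the unique bad point is some crossing $p$ whose two strands lie in the same component. Computing $f_{B'}(D)$ via the skein relation at $p$: the crossing-changed diagram is monotone under $B'$ with writhe $w-2$, contributing $A^{w-2}v_{n}$; each horizontal smoothing $E,W,HC,HT$ is monotone under $B'$ with writhe $w-1$ and $n+1$ components, contributing $A^{w-1}v_{n+1}$; each vertical smoothing $VC,VT$ is monotone with writhe $w-1$ and $n$ components, contributing $A^{w-1}v_{n}$. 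Requiring $f_{B'}(D)=f_{B}(D)=A^{w}v_{n}$ gives, after dividing by $A^{w-1}$, exactly
$$Av_{n}+A^{-1}bv_{n}+(c_{1}+c_{2}+c_{3}+c_{4})v_{n+1}+(d_{1}+d_{2})v_{n}=0,$$
which is the defining relation in $R_{2}$. The same identity, with appropriate writhe shifts, handles the Reidemeister I computation: applying the skein relation at a freshly inserted positive kink on a monotone diagram reduces to this relation and forces $f$ of the kinked diagram to equal $A$ times $f$ of the original; the negative case is analogous by conjugation.

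Finally, define $F(D)=f(D)A^{-w(D)}$. Since the writhe $w(D)$ is unchanged under Reidemeister II and III moves, $F$ inherits R2- and R3-invariance from $f$. Under R1 the writhe changes by $\pm 1$ and $f$ is multiplied by $A^{\pm 1}$, and these factors cancel, so $F$ is R1-invariant as well, hence a link invariant. For the trivial $n$-component diagram the writhe is zero, so $F=f=v_{n}$. Passing to any homomorphic image of $Y$ preserves all the relations used in the induction and therefore yields a further link invariant.
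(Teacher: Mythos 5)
Your proposal follows the paper's own route: the same induction on the index $(c,d)$, the modified base case $f(\text{monotone})=A^{w}v_{n}$, the identification of the base-point-change argument (Case 3) as the one place where the new relation $Av_{n}+A^{-1}bv_{n}+(c_{1}+c_{2}+c_{3}+c_{4})v_{n+1}+(d_{1}+d_{2})v_{n}=0$ enters, the observation that $f$ itself is only multiplied by $A^{\pm1}$ under a Reidemeister I move while Reidemeister II and III preserve writhe, and the final normalization $F=fA^{-w}$. All of that matches the paper.

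There is, however, one step you assert that is genuinely non-obvious and that the paper has to prove separately: that every smoothing $E,W,HC,HT,VC,VT$ of the monotone diagram $D$ at $p$ has writhe $w-1$ (in your normalization). Deleting the crossing $p$ is not the only thing a smoothing does --- the smoothings $W$, $HC$, $HT$, $VC$, $VT$ reverse the orientation of an arc of the component through $p$, and reversing the orientation of one strand at a crossing flips that crossing's sign, so in a general diagram the six smoothings would \emph{not} all have the same writhe and the skein relation would not collapse to a single relation in $v_{n}$ and $v_{n+1}$ with uniform powers of $A$. The paper closes this with a dedicated Claim: because $D$ is monotone, the smoothed diagram is a split-able trivial link whose components can be separated by Reidemeister II and III moves (which preserve writhe and do not alter the individual knot diagrams), after which the writhe is a sum of writhes of knot diagrams, and the writhe of a knot diagram is independent of its orientation; one smoothing visibly has writhe $w-1$, hence all do. Without this argument (or an equivalent one) your key computation in Case 3 is unjustified, and the same issue recurs in your kink computation for Reidemeister I if you insist on running it through the skein relation rather than reading both values off the monotone base case as the paper does.
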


The following proposition gives the proof of the theorem.

\begin{prop} $f,F$ satisfy the following properties.

\noindent (1) For a monotone diagram $D$, $f(D)=A^wv_n$, $F(D)=v_n$, where $w$ is a the writhe of the link diagram, $n$ is the number of components.

\noindent (2) For any link diagram $D$, $F(D)=f(D)A^{-w}$.

\noindent (3) For any marked link diagram $D$, $f(D)$ and $F(D)$ are uniquely defined.

\noindent (4) The function $f$ satisfies type one skein relations if we resolve at any  point.

\noindent (5) The functions $f,F$ are invariant under base point change.

\noindent (6) $F$ is invariant under Reidemeister move I.

\noindent (7) $F$ and $f$ are invariant under Reidemeister moves II and III.

\noindent (8) $F$ and $f$ are invariant under changing order of components.

\end{prop}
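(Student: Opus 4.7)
The plan is to adapt the inductive proof of the previous Proposition with two modifications: replace the base value $v_n$ on monotone diagrams by $A^w v_n$, and replace the scalar relation used to secure base-point invariance by the writhe-twisted relation built into $R_2$. Most of the prior argument — the inductive scheme on the index $(c,d)$, the equality $f_{pq}=f_{qp}$, and the treatment of Reidemeister II/III and component-ordering — transfers verbatim, because the nontrivial relations $R$ on the coefficients are unchanged. I would therefore focus on the three places where the constants $v_n$ interact with the coefficients.

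For properties (1)--(4) I would set the base case: for a monotone diagram $D$ with $n$ components and writhe $w$, put $f(D) := A^w v_n$, so that $F(D) = A^{-w}f(D) = v_n$, giving (1) and (2) on monotone diagrams and the base of the induction. The inductive step defines $f(D)$ when $D$ has a bad point by resolving at the first bad point, exactly as before, and $F$ is then defined by the rule $F = A^{-w}f$; this yields (2) for all diagrams once $f$ is defined. Uniqueness (3) follows because the skein relation at the first bad point determines $f(D)$ from terms of strictly smaller index. Property (4), that the skein holds at every crossing, is the same commuting-square argument $f_{pq}=f_{qp}$ as in the previous proof, depending only on the relations $R$ which are unchanged.

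The main obstacle is property (5) in Case 3, where $p$ is good with respect to $B$ but bad with respect to $B'$. By Lemma~\ref{lem:5}(3), $D$ with base-point set $B$ and every smoothing $D_i$ obtained by resolving at $p$ are monotone, so all of $f_B(D)$, $f_{B'}(D')$ (for the crossing-changed $D'$), and the $f(D_i)$ are known monotone values. Assuming $p$ is a positive crossing, a crossing change decreases the writhe by $2$, while each of the six smoothings deletes the crossing and decreases the writhe by $1$; moreover the four horizontal/twist smoothings produce $n+1$ components and the two vertical smoothings preserve $n$ components. Substituting these monotone values $A^{w(D)}v_n$, $A^{w(D)-2}v_n$, $A^{w(D)-1}v_{n+1}$ and $A^{w(D)-1}v_n$ into the skein relation and dividing by $A^{w(D)-1}$, the identity required for base-point invariance becomes
\[
A v_n + A^{-1} b v_n + (c_1 + c_2 + c_3 + c_4) v_{n+1} + (d_1 + d_2) v_n = 0,
\]
which is precisely the relation imposed in $R_2$. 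This is the reason the analogous relation in $R_1$ had to be replaced by its writhe-twisted analog.

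For the remaining properties, Reidemeister II and III moves preserve writhe, so the proofs from the previous Proposition apply without change to $f$ and hence to $F$, giving the $f$-part of (7) and the $F$-part of (7). For Reidemeister I, $f$ is not invariant because an added kink shifts the writhe by $\pm 1$; however, applying the skein relation at the new crossing (whose two arcs belong to the same component) to a monotone background reduces by the same $R_2$-identity above to the statement that $f$ scales exactly by $A^{\pm 1}$, so that $F = A^{-w}f$ is invariant, giving (6). Component-order invariance (8) follows from Lemma~\ref{lem:5}(1) exactly as before, together with the fact that both $f$ and $F$ on a monotone diagram depend only on $w$ and $n$ and not on the ordering. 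Thus all eight statements reduce to routine induction plus the single new check that the writhe-twisted relation in $R_2$ is what the base-point and Reidemeister I arguments require.
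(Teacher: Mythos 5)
Your overall strategy coincides with the paper's: the same induction on the index $(c,d)$, the same reduction of every statement to the monotone base case, and the observation that the only genuinely new checks are where the values $v_n$ meet the coefficients, which is where the writhe-twisted relation in $R_2$ enters. Statements (1)--(4) and (6)--(8) are handled essentially as in the paper.

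There is, however, a genuine gap in your treatment of (5). You assert that ``each of the six smoothings deletes the crossing and decreases the writhe by $1$,'' i.e.\ that $w(E)=w(W)=w(HC)=w(HT)=w(VC)=w(VT)=w(D)-1$. This is not a purely local count. Recall from Section~2.1 that the smoothings $W$, $HC$, $HT$, $VC$, $VT$ come with a \emph{global} reorientation of the link components passing through the disk at $p$; reversing the orientation of one component while leaving another fixed flips the sign of every crossing between them, so a priori the writhes of the various smoothings need not agree, and the naive ``delete one positive crossing'' argument does not establish the equality. The paper isolates exactly this point as a separate Claim and proves it using the special position of the diagram: by Lemma~\ref{lem:5}(3) the smoothed diagrams are monotone unlink diagrams, so their components can be pulled apart into disjoint knot diagrams by Reidemeister II and III moves (which preserve writhe and do not alter each individual knot diagram); once the components are disjoint there are no inter-component crossings left, and the writhe of a knot diagram is independent of its orientation, whence all horizontal smoothings have equal writhe, and the vertical ones are matched to them. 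Without this argument (or an equivalent one) the substitution you perform into the skein relation, and hence the derivation of the identity $Av_n+A^{-1}bv_n+(c_1+c_2+c_3+c_4)v_{n+1}+(d_1+d_2)v_n=0$ as the exact condition needed, is not justified. You should supply this writhe-equality claim explicitly; everything else in your proposal then goes through.
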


\begin{proof}
As before, the proof is an induction on index $(c,d)$.

\bigskip\noindent {\bf Proof of the statement (1)(2)}:
There is nothing to prove.

\bigskip\noindent {\bf Proof of the statement (3)(4)}: As in last section, for a link diagram $D$ with bad point, we resolve it at the first bad point and use the skein equation to define $f(D)$. Then $f$ is defined inductively for all link diagrams. Since $f_{pq}=f_{qp}$ still hold, we have (4).

\bigskip\noindent {\bf Proof of the statement (5)}:

This is similar as in last section. Suppose that (5) is true for diagrams with crossings $\leq c$.

Given a diagram $D$ with a fixed orientation and order of components, suppose that there are two base point sets $B$ and $B'$. We only need to deal with the case that $B$ and $B'$ has only one point $x$ and $x'$ different, they are in the same component $k$, and between $x$ and $x'$ there is only one crossing point $p$. Using the base point sets $B$ or $B'$, $D$ has the same bad points except $p$. Let $f_B(D)$ and $f_{B'}(D)$ denote $f(D)$ using base point sets $B$ and $B'$ respectively.

As in last section, we can assume the crossings different from $p$ are all good points.  $p$ is good with respect to $B$, bad with respect to $B'$. Then the two arcs passing through $p$ are from same link component, and the diagram $D$ with base point set $B$ is a monotone diagram.

Suppose $D$ has $n$ components. Let $w(p)$ denote the sign of the crossing $p$, let $w$ denote the sum of signs of all other crossings of $D$. Let $w(D)$ denote the writhe of $D$. Then $w(D)=w(p)+w$.

\bigskip\noindent {\bf Claim}: $w(E)=w(W)=w(HC)=W(HT)=w(VC)=w(VT)=w$.

\noindent {\bf Proof of claim}: For a horizontal smoothing, one get two new link components by smoothing at $p$. See Fig. \ref{fig4}. For the horizontal smoothing, there is a choice of base points, orientation and order such that the result is a monotone diagram. Hence we can move each components such that the result is a disjoint union of knots diagrams. Denote it by $D_i^*$ Furthermore, we can assure that the diagram of each knot is not changed during the process. Hence it can be realized by a sequence of Reidemeister II and III moves. Notice that Reidemeister move II and III do not change writhe. Hence $w(D_i^*)=w(D_i)$.

On the other hand, for a knot, its writhe is independent of the choice of orientation. Hence for $D_i^*$, $w(D_i^*)$ is the same for all $E,W,HC,HT$. Hence $w(E)=w(W)=w(HC)=W(HT)$.

What's more, it is clear that $W(HT)=w(VC)$ and $w(HC)=w(VT)$.

Notice that $w(HC)=w$. So the claim is proved.

\bigskip

By induction hypothesis, $F$ is a knot invariant defined for diagrams with crossing $\leq c-1$.

If we resolve $D$ at $p$, we get diagrams $D_0,E,W,HC,HT,VC,VT$. (We also refer to them as $D_i$, $i=0,1,\cdots ,6$.) Applying the above lemma~\ref{lem:5}(3) to $D$, all diagrams $D_i$ are unlinks. Then $F(D_i)=v_{n+1}$ for horizontal smoothings and $F(D_j')=v_{n}$ for vertical smoothings. Also $F(D_0)=v_n$,
So $f(D_i)=A^wv_{n+1}$ for horizontal smoothings and $F(D_j')=A^wv_{n}$ for vertical smoothings.

\noindent Case 1.  If $p$ is a positive crossing, and if $F_B'(D)=v_n$, then $$f_{B'}(D)+bf_{B'}(D_0)=A^w\times (Av_n+A^{-1}bv_n).$$

\noindent Case 2.  If $p$ is a negative crossing, and if $F_B'(D)=v_n$, then $$f_{B'}(D_0)+bf_{B'}(D)=A^w\times (Av_n+A^{-1}bv_n).$$

On the other hand, $A^w\times (Av_n+A^{-1}bv_n+(c_1+c_2+c_3+c_4)v_{n+1}+(d_1+d_2)v_n) =0$ is hold for all $n$. $f_{B'}(D)$ is defined by the skein equation while all other terms are known. The unique solution in each case is $F_B'(D)=v_n$. Hence $F$ and $f$ are invariant under change of base points for diagrams having $c$ crossings.

\bigskip\noindent {\bf Proof of the statement (6)}: As in last section, we can move the base points such that crossing point in the Reidemeister move I is a good point. The proof is the same as in last section.

\bigskip\noindent {\bf Proof of the statement (7)}: Notice that Reidemeister move II does not change writhe. Check the following equations.

$$f(X_i)+bf(Y_i)+c_1'f(E)+c_2'f(W)+d_1'f(S)+d_2'f(N)=0$$ and $$f(X_i')+bf(Y_i)+c_1'f(E')+c_2'f(W')+d_1'f(S')+d_2'f(N')=0.$$

After considering writhe, we still have $f(E)=f(E'),f(W)=f(W'),f(S)=f(N'), f(N)=f(S')$. Since we assumed that $d_1'=d_2'$, we have $c_1'f(E)+c_2'f(W)+d_1'f(S)+d_2'f(N)=c_1'f(E')+c_2'f(W')+d_1'f(S')+d_2'f(N')$. Therefor, $f(X_i)=f(X_i')$ for $i=1,2,3$. Since either $X_i$ or $X_i'$ is a monotone diagram, invariance under Reidemeister II move is proved.

\bigskip\noindent {\bf Proof of the statement (8)(9)}: Notice that Reidemeister move III does not change writhe. The proofs are the same.

\end{proof}

If we let $A=1$, we see the first invariant is a special case of the modified invariant.

\section{Some simplification}\label{sec:6}

The modified invariant is a generalization of both HOMFLY polynomial and Kauffman two-variable polynomial. However, it is very complicated, hard to compute. There are some symmetric simplification of it. For example, we let $b=b'=b^{-1}$, then $x=\overline{x}$. Let $c/4=c_1=c_2=c_3=c_4,d/2=d_1=d_2,c'/2=c_1'=c_2',d'/2=d_1'=d_2'$, then $x=\widehat{x}$. The the relations are dramatically simplified.

Furthermore, we add the new relations $d'=bc', cv_{n+1}+(A+A^{-1}b+d)v_n=0$. Plug all those into the relation set $R$, the new relation set contains the following.

$$d'=bc',dc'=bc'c',b^2=1,dd=cc',cv_{n+1}=-(A+A^{-1}b+d)v_n, i\geq 1.$$

To go one step further, we need the famous diamond lemma {\cite{N}}. One can also consult Wikipedia.

\bigskip
\noindent {\bf An Equivalent version of the diamond lemma {\cite{N}}}:
For every binary relation with no decreasing infinite chains and satisfying the diamond property, there is a unique minimal element in every connected component of the relation considered as a graph.

\bigskip
Since $d'=bc'$, we delete the variable $d'$, use the following variables $A,b,c,c',d,v_n,n=1,2,\cdots $. Regard the above relations as a rewriting system as follows.
\begin{align}
dc'\rightarrow bc'c',b^2\rightarrow 1,dd\rightarrow cc',AA^{-1}\rightarrow 1, cv_{n+1}\rightarrow -(A+A^{-1}b+d)v_n, i\geq 1. \label{eq:rule}
\end{align}

Let $\deg v_{n+1}=8n, \deg d=4, \deg c=\deg c'=2, \deg b=1, \deg A= \deg A^{-1}=1$. Then one can see that the rewriting system always decreases the degree, hence there does not exist decreasing infinite chains.

To verify the diamond property, notice that for the value of $F(D)$, every term contains exactly one variable from $\{v_n, n=1,2,\cdots \}$, hence there is only the following one case to check.
$$d(dc')\rightarrow b(dc')c'\rightarrow (bb)c'c'c'\rightarrow c'c'c',$$ or $$(dd)c'\rightarrow cc'c'.$$ Hence we add the new relation $c'c'c'=cc'c'$. Now the rewriting system is as follows.

\begin{align}
dc'\rightarrow bc'c',b^2\rightarrow 1,dd\rightarrow cc',,AA^{-1}\rightarrow 1, c'c'c'\rightarrow cc'c', cv_{n+1}\rightarrow -(A+A^{-1}b+d)v_n, i\geq 1.
\label{eq:rule1}
\end{align}

Now, verify the diamond property again, there are two new cases to check.

$$(dc')c'c'\rightarrow b(c'c'c')c'\rightarrow bc(c'c'c')\rightarrow bccc'c',$$

or $$d(c'c'c')\rightarrow c(dc')c'\rightarrow cb(c'c'c')\rightarrow bccc'c'.$$

Here is another case.
$$d(dc'c'c')\rightarrow dbccc'c' = bcc(dc'c')  \rightarrow bcc b(c'c'c') \rightarrow bbcccc'c'\rightarrow cccc'c',$$ or
$$(dd)c'c'c'\rightarrow cc'c'c'c' \rightarrow ccc'c'c' \rightarrow cccc'c'.$$

Hence this rewriting system satisfies the condition of the diamond lemma, any result $F(D)$ (or $f(D)$) has a unique normal form.

Now, let $Z$ denote the quotient ring $Z[A,A^{-1},b,c,c',d,v_1,v_2,v_3,\cdots ]/R_3$, where $R_3=\{dc'=bc'c',b^2=1,dd=cc',AA^{-1}=1, cv_{n+1}+(A+A^{-1}b+d)v_n=0, i\geq 1 \} $. Then we have the following theorem.

\begin{theorem}\label{sec:6}
There is a link invariant $F$ with values in $Z$. For oriented link diagram $D$, $F(D)=f(D)A^{-w}$ where $w$ is a the writhe of the link diagram, and $f$ satisfies the following skein relations.

\noindent (1) If the two strands are from same link component, then

$f(E_{+})+bf(E_{-})+c(f(E)+f(W)+f(HC)+f(HT))/4+d(f(VC)+f(VT))/2=0.$

\noindent (2) Otherwise,
$f(E_{+})+bf(E_{-})+c'(f(E)+f(W))/2+bc'(f(S)+f(N))/2=0.$

The value of $F$ for a trivial n-component link is $v_n$.

And the rewriting rules (\ref{eq:rule1}) given a unique nomal form for the invariant $F$.
\upshape
\end{theorem}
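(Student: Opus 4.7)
The plan is to obtain this theorem as a consequence of the previous modified invariant theorem together with a diamond-lemma verification for the rewriting system (\ref{eq:rule1}). First I would show that the specialization $b=b'=b^{-1}$, $c_1=c_2=c_3=c_4=c/4$, $d_1=d_2=d/2$, $c_1'=c_2'=c'/2$, $d_1'=d_2'=d'/2$, together with $d'=bc'$ and the stated evaluation relation on $v_n$, defines a ring homomorphism from $Y$ (the ring of the modified invariant) to $Z$. To do this, I would go through the collection $R$ of nontrivial relations from Cases 1--5 of Section~2.2 and verify that each of them becomes trivially true after the identifications $x=\overline{x}$ (forced by $b=b^{-1}$) and $x=\widehat{x}$ (forced by the symmetry $c_i=c_j$, $d_i=d_j$). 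Once this is done, the invariant $F$ of the previous theorem with values in $Y$ pushes forward to an invariant with values in $Z$ satisfying the stated skein relations, and the values on trivial links are $v_n$ by construction.

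The second half is the normal-form statement, and here the diamond lemma does the work. The plan is to verify its two hypotheses for the rewriting system (\ref{eq:rule1}): termination and local confluence. For termination, I would use the grading $\deg v_{n+1}=8n$, $\deg d=4$, $\deg c=\deg c'=2$, $\deg b=\deg A=\deg A^{-1}=1$ (as in the excerpt) and check that each rule strictly decreases total degree on every monomial it applies to; this rules out any infinite descending chain. For local confluence, I would enumerate all critical overlaps among the left-hand sides
\[
dc',\ b^2,\ dd,\ AA^{-1},\ c'c'c',\ cv_{n+1},
\]
reduce each pair by both available rewrites, and verify the results coincide. The author has already handled the overlap $d\cdot(dc')$ vs $(dd)\cdot c'$ (which forced the rule $c'c'c'\to cc'c'$), the overlap $(dc')\cdot c'c'$ vs $d\cdot (c'c'c')$, and the iterated overlap $d\cdot (dc'c'c')$ vs $(dd)\cdot c'c'c'$.

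What remains is to check the few residual overlaps I expect to be nontrivial: overlaps of $b^2$ with $dc'$ (via $d\cdot bc'\cdot ?$ -- note $b$ commutes with everything in $Z$, so these are routine), overlaps of $c'c'c'$ with itself giving a five-letter word $c'c'c'c'c'$ (check both ways reduce to $ccc'c'c'$), overlaps of $dd$ with itself in $ddd$, and most importantly any overlap involving the rewrite $cv_{n+1}\to -(A+A^{-1}b+d)v_n$. Because $v_n$ appears only linearly in every term (each diagram contributes exactly one $v_n$-symbol via the skein reduction), overlaps involving $v_n$ can only occur at the $c$ on its left; the only words to examine are $(dd)\cdot v_{n+1}$ versus $d\cdot(dv_{n+1})$ (which doesn't fire, so no overlap), and $c\cdot c\cdot v_{n+1}$-style ambiguities, all of which I expect to close after applying the $b^2=1$ and $dd=cc'$ rules.

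The main obstacle will be the overlap analysis involving $cv_{n+1}$: after rewriting, the right-hand side reintroduces a $d$, and one must then verify that every further reduction path from $d\cdot(cv_{n+1})$-type words collapses, which may force me to add one more derived rule or to observe that the new terms are already in normal form because $d$ is not the head of any left-hand side except $dc'$ and $dd$. Once all critical pairs are resolved, the diamond lemma furnishes a unique normal form in each connected component, and hence the value $F(D)\in Z$ has a canonical representative obtainable by applying (\ref{eq:rule1}) in any order; combined with the first half this completes the proof.
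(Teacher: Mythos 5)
Your proposal follows essentially the same route as the paper: specialize the modified invariant via $b=b'=b^{-1}$, $c_i=c/4$, $d_i=d/2$, $c_i'=c'/2$, $d_i'=d'/2=bc'/2$, then apply the diamond lemma to the rewriting system with exactly the grading you cite and the same critical-pair analysis (the paper checks $d(dc')$ vs.\ $(dd)c'$, which forces the added rule $c'c'c'\to cc'c'$, then the two further overlaps, and dismisses the remaining ones on the grounds that every monomial carries exactly one $v_n$). One small correction to your first half: the relations of $R$ do not all become \emph{trivially} true under the specialization --- several of them collapse precisely to $dd=cc'$ and $dc'=bc'c'$, which is where those generators of $R_3$ come from --- but since these lie in $R_3$ this does not affect the well-definedness of the homomorphism $Y\to Z$ that you need.
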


\bigskip\noindent {\bf Example: The right hand trefoil.}

Let $H$ denote the minimal diagram of Hopf link, $H^*$ denote its mirror image, $D$ denote a minimal diagram of the right hand trefoil. Apply the skein equation to any crossing point then we get
$$f(H)=-bv_2-Ac'v_1-A^{-1}bc'v_1=-bv_2-c'(A+bA^{-1})v_1,$$
$$f(H^*)=-bv_2-bA^{-1}c'v_1-bAbc'v_1=-bv_2-c'(A+bA^{-1})v_1.$$

Then
\begin{flalign*}
f(D) &=-bAv_1-\frac{c}{2}(f(H)+f(H^*))-dA^{-2}v_1   \\
     &=-bAv_1+cbv_2 +cc'(A+bA^{-1})v_1 -dA^{-2}v_1  \\
     &=-bAv_1-b(A+A^{-1}b+d)v_1 +cc'(A+bA^{-1})v_1 -dA^{-2}v_1  \\
     &=((cc'-2b)A-bd+(bcc'-1)A^{-1}-dA^{-2})v_1  \\
\end{flalign*}

The invariant $F(D)=A^{-3}f(D)$.

In the above calculation we use the rewriting rules (\ref{eq:rule1}), and we use the $=$ sign instead of $\rightarrow$, since they are equal in the ring. The rewriting rules (\ref{eq:rule1}) gives the unique normal form. In the final result of any $F(D)$, the highest powers of $d,b$ are $\leq 1$, the highest power of $c'$ is $\leq 2$. This is a generalization of the 2-variable Kauffman polynomial.

\section*{Acknowledgements}
The author would like to thank Ruifeng Qiu, Jiajun Wang, Ying Zhang, Xuezhi Zhao, Hao Zheng, Teruhisa Kadokami for helpful discussions. The author also thanks all the organizers of 2009 summer school of knot theory in ICTP, Italy. He really had a great time there and got many inspirations. The author also thanks NSF of China for providing traveling fee for the trip, and all the efforts made by ICTP, Italy.

\nocite{*}

\end{document}